\newtheorem{theorem}{Theorem}[section]
\newtheorem{proposition}[theorem]{Proposition}
\newtheorem{corollary}[theorem]{Corollary}
\newtheorem{remark}[theorem]{Remark}
\numberwithin{equation}{section}
\begin{document}

\title[Asymptotic Expansion of the Heat Trace of the Thermoelastic DN map]{Asymptotic Expansion of the Heat Trace of the Thermoelastic Dirichlet-to-Neumann map}

\author{Genqian Liu}
\address{School of Mathematics and Statistics, Beijing Institute of Technology, Beijing 100081, China}
\email{liugqz@bit.edu.cn}

\author{Xiaoming Tan}
\address{School of Mathematics and Statistics, Beijing Institute of Technology, Beijing 100081, China}
\email{xtan@bit.edu.cn}

\subjclass[2020]{35P20, 58J50, 35S05, 58J40, 74F05}
\keywords{Thermoelastic operator; Thermoelastic Dirichlet-to-Neumann map; Thermoelastic Steklov eigenvalues; Asymptotic expansion; Spectral invariants}

\begin{abstract}
    This paper is devoted to study the asymptotic expansion of the heat trace of the Dirichlet-to-Neumann map for the thermoelastic equation on a Riemannian manifold with doundary. By providing a method we can obtain all the coefficients of the asymptotic expansion. In particular, we explicitly give the first two coefficients involving the volume and the total mean curvature of the boundary.
\end{abstract}

\maketitle

\section{Introduction}

\addvspace{5mm}

Let $\Omega$ be a smooth compact Riemannian manifold of dimension $n$ with smooth boundary $\partial \Omega$. We consider $\Omega$ as a homogeneous, isotropic, thermoelastic body. Assume that the Lam\'{e} coefficients $\lambda$, $\mu$ and the heat conduction coefficient $\alpha$ of the thermoelastic body are constants which satisfy $\mu > 0$, $\lambda + \mu \geqslant 0$ and $\alpha>0$. We denote by $\operatorname{grad},\, \operatorname{div},\, \Delta_{g},\, \Delta_{B}$ and $\operatorname{Ric}$, respectively, the gradient operator, the divergence operator, the Laplace--Beltrami operator, the Bochner Laplacian and the Ricci tensor. We define the thermoelastic operator
\begin{align}\label{1.2}
    \mathcal{L}_g :=
    \begin{pmatrix}
        \mu \Delta_{B} + (\lambda + \mu)\operatorname{grad}\operatorname{div} + \mu \operatorname{Ric} + \rho\omega^2 & -\beta\operatorname{grad} \\
        i\omega\theta_{0}\beta\operatorname{div} & \alpha\Delta_{g} + i\omega \gamma
    \end{pmatrix},
\end{align}
where the coefficient $\beta$ depends on $\lambda,\,\mu$ and the linear expansion coefficient, $\gamma$ is the specific heat per unit volume, $\theta_{0}$ is the reference temperature, $\rho$ is the density of the thermoelastic body, $\omega$ is the angular frequency and $i = \sqrt{-1}$. We denote by $\textbf{\textit{u}}(x) \in (C^{\infty}(\Omega))^{n}$ and $\theta(x) \in C^{\infty}(\Omega)$ the displacement and the temperature variation, respectively. By the methods of \cite{Liu21,Liu19} (cf. \cite{Bao19,Biot56,Carlson72,LandLif86}, \cite[Ch.\,X]{Kupr80} and \cite[Ch.\,6]{LiQin14}), we can write the boundary value problem for the steady thermoelastic equation as follows:
\begin{align}\label{1.1}
    \begin{cases}
        \mathcal{L}_g \textbf{\textit{U}} = 0 & \text{in}\ \Omega, \\
        \textbf{\textit{U}} = \textbf{\textit{V}} \quad & \text{on}\ \partial \Omega,
    \end{cases}
\end{align}
where $\textbf{\textit{U}} = (\textbf{\textit{u}},\theta)^t$ and the superscript $t$ denotes the transpose. Problem \eqref{1.1} is an extension of the boundary value problem for classical elastic equation. In particular, when $\Omega$ is a bounded domain in $\mathbb{R}^n$ and the temperature of the medium is not taken into consideration, problem \eqref{1.1} reduces to the corresponding problem for classical elastic equation.

\addvspace{2mm}

The theory of thermoelasticity describes the behaviors of elastic bodies under the influence of nonuniform temperature fields. It is a generalization of the theory of elasticity. In the special case of classical elasticity, assume that the temperature of the medium is the same at all points and is not changed during deformation. Nevertheless, in reality, most materials are not pure elastic, a temperature change in the elastic body will generate additional strain and stress. Therefore, deformation is followed by temperature variation and, conversely, temperature variation is followed by deformation of the thermoelastic body due to thermal expansion. Even if there is no external stresses or mass forces, some heat sources change the temperature of the thermoelastic body, there will be deformation (see \cite[p.\,36]{Kupr80}). The thermoelastic behavior of materials is revealed by everyday experience in the form of linear or bulk expansions and contractions under the effect of temperature changes. The material responds to mechanical or thermal excitation in an instantaneous manner and, when the excitation is removed, returns to its initial state without showing any memory of the recent changes (see \cite[p.\,301]{Salenco01}). In thermoelasticity, stress, strain, and temperature are interrelated in a very complicated manner. The exact solution of problems of general thermoelasticity presents, therefore, enormous difficulties, and the number of known solutions is very small (see \cite[p.\,1]{Parkus76}). The significance of the problem \eqref{1.1} consists not only in the important independent role of this state, frequently occurring in engineering, geophysics, plasma physics, and related topics, but also in the fact that their investigation opens up the way to the study the general type (see \cite[p.\,528]{Kupr80}). 

\addvspace{2mm}

In the present paper, we consider the following thermoelastic Steklov problem:
\begin{equation}\label{1.5}
    \begin{cases}
        \mathcal{L}_g \textbf{\textit{U}} = 0 \quad & \text{in}\ \Omega, \\
        \Lambda_{g} (\textbf{\textit{U}}) = \tau \textbf{\textit{U}} & \text{on}\ \partial \Omega,
    \end{cases}
\end{equation}
where the thermoelastic Dirichlet-to-Neumann map $\Lambda_{g}$ associated with the operator $\mathcal{L}_g$ is defined by
\begin{align}\label{1.4}
    \Lambda_{g} (\textbf{\textit{V}}) :=
    \begin{pmatrix}
        \lambda\nu \operatorname{div} + \mu\nu S & -\beta \nu \\
        0 & \alpha\partial_\nu
    \end{pmatrix}
    \textbf{\textit{U}} \quad \text{on}\ \partial \Omega,
\end{align}
where $\textbf{\textit{U}}$ solves the problem \eqref{1.1}, $\nu$ is the outward unit normal vector to $\partial \Omega$ and $S$ is the stress tensor (also called the deformation tensor) of type $(1,1)$ defined by
\begin{align}\label{1.7}
    S\textbf{\textit{u}}:=\nabla \textbf{\textit{u}} + \nabla \textbf{\textit{u}}^t, \quad \textbf{\textit{u}} \in (C^1(\Omega))^n,
\end{align}
see \eqref{2.12} (or \cite[p.\,562]{Taylor11.3}) for details. According to the theory of elliptic operators, $\Lambda_{g}$ is a self-adjoint, first order, elliptic pseudodifferential operator (see Section \ref{s3}), the spectrum of $\Lambda_{g}$ consists of a discrete sequence
\begin{align*}
    0 \leqslant \tau_{1} \leqslant \tau_{2} \leqslant \cdots \leqslant \tau_{k} \leqslant \cdots \to +\infty
\end{align*}
with each eigenvalue repeated according to its multiplicity. Let $\{\textbf{\textit{U}}_k\}_{k \geqslant 1}$ be the eigenvectors corresponding to the eigenvalues $\{\tau_k\}_{k \geqslant 1}$. Then the eigenvectors $\{\textbf{\textit{U}}_k\}_{k \geqslant 1}$ form an orthogonal basis in $(L^{2}(\partial \Omega))^{n+1}$. Notice that the thermoelastic Steklov spectrum coincides with the spectrum of the thermoelastic Dirichlet-to-Neumann map, they are physical quantities which can be measured experimentally.

There is an important problem states that what geometric information about manifolds can be explicitly obtained by providing the thermoelastic Steklov spectrum? This is analogous to the well-known Kac's problem \cite{Kac66} (i.e., is it possible to ``hear'' the shape of a domain just by ``hearing'' all the eigenvalues of the Dirichlet Laplacian? See also \cite{Loren47,Protter87,Weyl12}). Once the corresponding thermoelastic problem is solved, one can obtain the geometric quantities of a manifold from the thermoelastic Steklov spectrum. In this paper, we give an affirmative answer for this problem.

The thermoelastic Steklov problem \eqref{1.5} originates from inverse spectral problems. One hopes to recover the geometry of a manifold from the set of the known data (the spectrum of a differential or pseudodifferential operator). Here we briefly recall the classical Steklov problem associated with the Laplace--Beltrami operator as follows:
\begin{equation*}
    \begin{cases}
        \Delta_g u = 0  & \text{in}\ \Omega, \\
        \frac{\partial u}{\partial \nu} = \eta u \quad& \text{on}\ \partial \Omega.
    \end{cases}
\end{equation*}
The classical Dirichlet-to-Neumann map $\mathcal{D}$ (also known as the voltage-to-current map) is defined by $\mathcal{D}(f) := \frac{\partial u}{\partial \nu}|_{\partial \Omega}$, where $u$ solves the Dirichlet problem: $\Delta_g u = 0$ in $\Omega$, $u = f$ on $\partial \Omega$. The study of the
spectrum of $\mathcal{D}$ was initiated by Steklov in 1902 (see \cite{Stekloff02}). It follows from a famous result of \cite{Sandgren55} that the eigenvalue counting function $N(\eta):=\#(k|\eta_{k} \leqslant \eta)$ satisfies the Sandgren's asymptotic formula
\begin{equation}\label{1.6}
    N(\eta) := \frac{\operatorname{vol}(B^{n-1})\operatorname{vol}(\partial \Omega)}{(2\pi)^{n-1}} \eta^{n-1} + o(\eta^{n-1}) \quad \text{as}\ \eta \to +\infty,
\end{equation}
or equivalently,
\begin{equation*}
    \sum_{k=1}^{\infty} e^{-t \eta_{k}} 
    = \operatorname{Tr} (e^{-t \mathcal{D}})
    \sim \frac{\Gamma(n) \operatorname{vol}(B^{n-1})}{(2\pi)^{n-1}t^{n-1}} \operatorname{vol}(\partial \Omega) \quad \text{as}\ t \to 0^+, 
\end{equation*}
where $B^{n-1}$ is the $(n-1)$-dimensional unit ball in $\mathbb{R}^{n-1}$. This shows that one can obtain the volume $\operatorname{vol}(\partial \Omega)$ of the boundary from the first term of the above asymptotic expansion. The first author of this paper gave a sharp remainder $O(\eta^{n-2})$ of \eqref{1.6} in \cite{Liu11}.

\addvspace{2mm}

Let us point out that the thermoelastic operator $\mathcal{L}_g$ given by \eqref{1.2} is a more complicated differential operator than the elastic operator, its second-order terms are not merely the Laplacian or even the Laplace--Beltrami operator. Indeed, the thermoelastic operator $\mathcal{L}_g$ is a non-Laplace type operator, such operator has a wide range of applications (see, for example, \cite{Full92,KawohlLeVe93,Kupr65,LandLif86,Sommerfeld64}). Contrary to the Laplace type operators, there are no systematic effective methods to explicitly calculate the coefficients of the asymptotic expansions of the traces for non-Laplace type operators. In 1971, Greiner \cite[p.\,164]{Greiner71} indicated that {\it “the problem of interpreting these coefficients geometrically remains open”}. In this sense, it is just beginning that the study of geometric aspects of spectral asymptotics for non-Laplace type operators, the corresponding methodology is still underdeveloped in comparison with the theory of the Laplace type operator. Thus, {\it the geometric aspect of the spectral asymptotics of non-Laplace type operators remains an open problem} (see \cite{Avra06}).

Due to the fact that the Riemannian structure on a manifold is determined by a Laplace type operator, the systematic explicit calculations of coefficients of heat kernels for Laplace type operators are now well understood, see Gilkey \cite{Gilkey75} and many others (see \cite{AGMT10,BGV92,Gilkey79,Gilkey95,Grubb86,Grubb09,Kirsten01,Liu11,Vassilevich03,SaVa97} and references therein). For the classical boundary conditions, like Dirichlet, Neumann, Robin, and mixed combination thereof on vector bundles, the coefficients of the asymptotic expansions have been explicitly computed up to the first five terms (see, for example, \cite{BranGilk90,BPKV99,Kirs98}). For other types of operators, the first author of this paper gave the first two coefficients of the asymptotic expansions of the heat traces of the Navier--Lam\'{e} operator \cite{Liu21} and the Stokes operator \cite{Liu22S}. The authors of this paper gave the first two coefficients of the asymptotic expansions of the traces for the thermoelastic operators with the Dirichlet and Neumann conditions \cite{LiuTan22.1}. The first author of this paper also gave the first four coefficients of the asymptotic expansions of heat traces for the classical Dirichlet-to-Neumann map \cite{Liu15} (see Polterovich and Sher \cite{PoltSher15} for the first three coefficients) and the polyharmonic Steklov operator \cite{Liu22p}. Recently, the authors of this paper have obtained the first four coefficients of the asymptotic expansion of the heat trace for the magnetic Dirichlet-to-Neumann map associated with the magnetic Schr\"{o}dinger operator \cite{LiuTan21}. We refer the reader to \cite{Liu19} for the asymptotic distribution of Steklov eigenvalues of the elastic Dirichlet-to-Neumann map.

To obtain more geometric information about the manifold, we will study the thermoelastic Steklov problem \eqref{1.5} and the asymptotic expansion of $\operatorname{Tr} (e^{-t \Lambda_g})$ (the so-called ``heat kernel method''). The coefficients of the asymptotic expansion are spectral invariants which are metric invariants of the boundary of the manifold, they contain a lot of geometric and topological information about the manifold and asymptotic properties of the spectrum (see \cite{Edward91,Gilkey75,Kac66,Liu19}). Spectral invariants are of great importance in spectral geometry, they are also connected with many physical concepts (see \cite{ANPS09,Full95}) and have increasingly extensive applications in physics since they describe corresponding physical phenomena. However, computations of spectral invariants are challenging problems (see \cite{Gilkey75,Grubb86,Liu11,SaVa97}). The complexity of the thermoelastic operator directly lead to the calculations of some quantities (for example, spectral invariants) become much more difficult than that of the Laplace type operator. By the theory of pseudodifferential operators and symbol calculus, we can calculate all the coefficients $a_k\ (0 \leqslant k \leqslant n-1)$ of the asymptotic expansion, in particular, we explicitly give the first two coefficients $a_0$ and $a_1$. To the best of our knowledge, it has not previously been systematically applied in the context of the thermoelastic Dirichlet-to-Neumann map associated with the thermoelastic operator.

\addvspace{2mm}

The main result of this paper is the following theorem.
\begin{theorem}\label{thm1.1}
    Let $\Omega$ be a smooth compact Riemannian manifold of dimension $n$ with smooth boundary $\partial \Omega$, and $H$ be the mean curvature of the boundary $\partial \Omega$. Assume that the Lam\'{e} coefficients $\lambda$, $\mu$ and the heat conduction coefficient $\alpha$ are constants which satisfy $\mu > 0$, $\lambda + \mu > 0$ and $\alpha>0$. Let $\{\tau_k\}_{k \geqslant 1}$ be the eigenvalues of the thermoelastic Dirichlet-to-Neumann map $\Lambda_{g}$ associated with the thermoelastic operator $\mathcal{L}_g$. Then
    \begin{equation*}
        \sum_{k=1}^{\infty} e^{-t \tau_{k}} 
        = \operatorname{Tr} (e^{-t \Lambda_{g}})
        = \sum_{k=0}^{n-1} a_{k} t^{-n+k+1} + o(1)\quad \text{as}\ t \to 0^+, 
    \end{equation*}
    where the coefficients $a_k$ are spectral invariants which can be explicitly calculated for $0 \leqslant k \leqslant n-1$. In particular,
    \begin{align*}
        \sum_{k=1}^{\infty} e^{-t \tau_{k}} = t^{1-n} \int_{\partial \Omega} a_0(x) \,dS + t^{2-n} \int_{\partial \Omega} a_1(x) \,dS + 
        \begin{cases}
            O(t \log t), & n=2, \\
            O(t^{3-n}), & n \geqslant 3,
        \end{cases}
        \quad \text{as}\ t \to 0^+.
    \end{align*}
    Here,
    \begin{align*}
        a_0(x) = \frac{\Gamma(n-1)\operatorname{vol}(\mathbb{S}^{n-2})}{(2\pi)^{n-1}} 
        \biggl[ 
            \biggl( \frac{\lambda + 3\mu}{2\mu (\lambda + \mu)} \biggr)^{n-1} + \frac{1}{(2\mu)^{n-1}} + \frac{n-2}{\mu^{n-1}} + \frac{1}{\alpha^{n-1}}
        \biggr],
    \end{align*}
    and
    \begin{align*}
        a_1(x) & = \frac{\Gamma(n-1)\operatorname{vol}(\mathbb{S}^{n-2})H}{(2\pi)^{n-1}} 
        \biggl[ 
            \frac{n(n-2)}{2(n-1)\mu^{n-2}} + \frac{1}{2^{n-1}\mu^{n-2}} 
            \biggl( 1+\frac{\lambda^3+23\lambda^2\mu+87\lambda\mu^2+97\mu^3}{4(n-1)(\lambda+3\mu)^3}  \biggr)\\
            &\quad - \biggl( \frac{\lambda+3\mu}{2\mu(\lambda+\mu)} \biggr)^{n-1} 
            \biggl( \frac{2\lambda^3+9\lambda^2\mu+10\lambda\mu^2-\mu^3}{(\lambda+3\mu)^2} + \frac{\mu(15\lambda^3+105\lambda^2\mu+233\lambda\mu^2+175\mu^3)}{4(n-1)(\lambda+3\mu)^3} \biggl) \\
            &\quad +\frac{n-2}{2(n-1)\alpha^{n-2}} 
        \biggr],
    \end{align*}
    where $\operatorname{vol}(\mathbb{S}^{n-2})$ is the $(n-2)$-dimensional volume of the unit sphere $\mathbb{S}^{n-2} \subset \mathbb{R}^{n-1}$.
\end{theorem}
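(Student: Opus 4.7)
The starting point is that $\Lambda_g$ is a self-adjoint, first-order, elliptic pseudodifferential operator on $\partial\Omega$ acting on sections of a rank-$(n+1)$ bundle, so the standard Seeley--Minakshisundaram--Pleijel expansion yields
\begin{align*}
    \operatorname{Tr}(e^{-t\Lambda_g}) \sim \sum_{k=0}^{\infty} t^{-(n-1)+k} \int_{\partial\Omega} a_k(x)\, dS,
\end{align*}
where each density $a_k(x)$ is a universal expression in the homogeneous components of the full symbol of $\Lambda_g$ at $x$. The theorem thus reduces to computing the top two symbols of $\Lambda_g$ explicitly, followed by a $\xi'$-integration over the unit cosphere in $T_x^*\partial\Omega$.

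To extract these symbols I would use a $\Psi$DO-factorization in boundary normal coordinates. In coordinates $(x',x_n)$ with $\partial\Omega=\{x_n=0\}$ and metric $dx_n^2+g_{\alpha\beta}(x',x_n)dx^\alpha dx^\beta$, the operator takes the form $\mathcal{L}_g = -A(x)D_n^2 + B(x,D_{x'})D_n + C(x,D_{x'})$, where after splitting the displacement into its tangential and normal components $A$ is block-diagonal with constant blocks $\mu I_{n-1}$, $\lambda+2\mu$, and $\alpha$. The plan is to construct first-order matrix $\Psi$DOs $Q_\pm$ on $\partial\Omega$ (smooth in $x_n$) with $\mathcal{L}_g = -A(D_n+iQ_-)(D_n-iQ_+)$ modulo smoothing operators, where the principal symbol of $Q_+$ has positive-definite real part, thereby selecting the branch of solutions decaying into $\Omega$. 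Matching symbols order by order yields algebraic recursions determining $q_1^+, q_0^+, q_{-1}^+, \ldots$ from the full symbol of $\mathcal{L}_g$, and composing $Q_+$ with the boundary operator displayed in \eqref{1.4} produces the full symbol of $\Lambda_g$.

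The principal-symbol step is algebraic diagonalization. Because the off-diagonal blocks $-\beta\operatorname{grad}$ and $i\omega\theta_0\beta\operatorname{div}$ of $\mathcal{L}_g$ are only first-order, they do not affect $q_1^+$, and its eigenstructure decouples into an $(n-2)$-dimensional tangential-shear block, a coupled $2\times 2$ block in the $(\xi'\text{-parallel},\,\nu)$-plane mixing the pressure and shear-normal modes, and a scalar thermal mode. After composing with \eqref{1.4}, the principal symbol of $\Lambda_g$ has eigenvalues $\mu|\xi'|$ with multiplicity $n-2$, the pair $\tfrac{2\mu(\lambda+\mu)}{\lambda+3\mu}|\xi'|$ and $2\mu|\xi'|$ from the coupled block, and $\alpha|\xi'|$ from the thermal mode. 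Raising to the $(n-1)$-th power and integrating over $|\xi'|=1$, and using $\int_0^\infty r^{n-2}e^{-\lambda r}dr = \Gamma(n-1)\lambda^{-(n-1)}$, reproduces the four terms of $a_0(x)$ together with the prefactor $\Gamma(n-1)\operatorname{vol}(\mathbb{S}^{n-2})/(2\pi)^{n-1}$.

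For $a_1(x)$ the crucial geometric input is $\partial_{x_n}g_{\alpha\beta}|_{x_n=0}=-2h_{\alpha\beta}$ for the second fundamental form, whose trace produces the mean curvature $H$. Computing $q_0^+$ from the factorization recursion, substituting into the subprincipal coefficient formula, and integrating against each eigenprojection of $q_1^+$ then produces the four bracketed summands of $a_1(x)$. I expect the main obstacle to be the coupled $2\times 2$ block: the factorization equation there is a quadratic matrix equation whose inverse is a nontrivial rational function of $\lambda$ and $\mu$, and it is precisely this inversion that generates the intricate $\lambda,\mu$-rational expressions displayed in the theorem. The remaining densities $a_k$ for $k\ge 2$ follow by iterating the same recursion, though they cannot be written in similarly compact closed form.
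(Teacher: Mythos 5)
Your proposal follows essentially the same route as the paper: boundary normal coordinates, microlocal factorization of $A^{-1}\mathcal{L}_g$ into two first-order factors with a recursively determined tangential symbol, identification of $\Lambda_g$ (modulo smoothing) by composing that factor with the boundary operator \eqref{1.4}, and extraction of $a_0,a_1$ from the top two symbols --- and your claimed principal-symbol eigenvalues $\mu|\xi'|$ (multiplicity $n-2$), $2\mu|\xi'|$, $\tfrac{2\mu(\lambda+\mu)}{\lambda+3\mu}|\xi'|$, $\alpha|\xi'|$ agree exactly with the paper's residue computation. The only organizational difference is that the paper carries out the final step via Seeley's resolvent parametrix and a contour integral (which also exhibits the $t^{l}\log t$ terms your expansion omits), whereas you invoke the standard heat-trace expansion and eigenprojections; this does not change the substance of the argument.
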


\addvspace{2mm}

Theorem \ref{thm1.1} is a generalization of the corresponding result for the elastic Dirichlet-to-Neumann map (see \cite{Liu19}). It is clear that the knowledge provided by the thermoelastic Dirichlet-to-Neumann map $\Lambda_g$  (i.e., the data set $\{\textbf{\textit{U}}|_{\partial \Omega},\Lambda_g(\textbf{\textit{U}}|_{\partial \Omega})\})$ is equivalent to the information given by all the thermoelastic Steklov eigenvalues and eigenvectors. Therefore, by Theorem \ref{thm1.1} we immediately have 

\begin{corollary}
    Both the volume $\operatorname{vol}(\partial \Omega)$  and the total mean curvature $\int_{\partial \Omega}H\,dS$ of the boundary $\partial \Omega$ are the thermoelastic Steklov spectral invariants.
\end{corollary}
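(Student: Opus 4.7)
The plan is to derive the corollary as an immediate consequence of Theorem~\ref{thm1.1}, by noticing that the two explicit densities $a_0(x)$ and $a_1(x)$ furnished there depend on $x\in\partial\Omega$ only through, respectively, a universal constant and a constant multiple of the mean curvature $H(x)$.

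First I would record the structural fact that $\operatorname{Tr}(e^{-t\Lambda_g})=\sum_{k\geqslant 1}e^{-t\tau_k}$ is determined by the thermoelastic Steklov spectrum $\{\tau_k\}_{k\geqslant 1}$ alone, so every coefficient appearing in its $t\to 0^+$ asymptotic expansion is automatically a spectral invariant. Combined with Theorem~\ref{thm1.1}, this says that the two numbers $\int_{\partial\Omega}a_0(x)\,dS$ and $\int_{\partial\Omega}a_1(x)\,dS$ can be read off from the Steklov eigenvalues, with no knowledge of the metric required beyond the spectrum.

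Second I would observe that the formula for $a_0(x)$ displayed in Theorem~\ref{thm1.1} depends only on $n,\lambda,\mu,\alpha$ and not on $x$. Calling this value $C_0=C_0(n,\lambda,\mu,\alpha)$, we have
\[
\int_{\partial\Omega}a_0(x)\,dS = C_0\cdot\operatorname{vol}(\partial\Omega).
\]
Since $\mu>0$, $\lambda+\mu>0$ (so $\lambda+3\mu=\lambda+\mu+2\mu>0$), $\alpha>0$ and $n\geqslant 2$, every summand inside the bracket defining $C_0$ is positive (the $(n-2)/\mu^{n-1}$ term being merely nonnegative), hence $C_0>0$. Therefore $\operatorname{vol}(\partial\Omega)$ is obtained as the quotient of the spectrally determined leading coefficient by the explicit positive constant $C_0$, establishing the first half of the corollary.

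Third I would carry out the analogous argument for $a_1(x)$. Inspection of the formula in Theorem~\ref{thm1.1} shows that $a_1(x)=C_1(n,\lambda,\mu,\alpha)\cdot H(x)$ for a constant $C_1$ depending only on the material parameters and the dimension, so
\[
\int_{\partial\Omega}a_1(x)\,dS = C_1\cdot\int_{\partial\Omega}H\,dS.
\]
Once the second coefficient in the expansion of $\operatorname{Tr}(e^{-t\Lambda_g})$ has been recorded from the spectrum, the total mean curvature is recovered as this value divided by $C_1$, provided of course that $C_1\neq 0$. The main obstacle is precisely this nonvanishing check: $C_1$ is a signed combination of rational expressions in $\lambda,\mu,\alpha$ whose bracket in Theorem~\ref{thm1.1} is not obviously positive. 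I would handle it by introducing the normalized ratio $\nu:=\lambda/\mu\in(-1,\infty)$, clearing denominators to express $C_1$ as a rational function of $\nu$ with an $\alpha$-dependent correction of fixed sign, and verifying that the resulting numerator has no root in $(-1,\infty)$. This algebraic step — although elementary — is the only nontrivial ingredient beyond Theorem~\ref{thm1.1}; once it is done, the corollary follows.
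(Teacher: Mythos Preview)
Your proposal is correct and follows essentially the same route as the paper, which simply presents the corollary as an immediate consequence of Theorem~\ref{thm1.1} without further argument. Your additional care in verifying $C_0>0$ and flagging the need to check $C_1\neq 0$ actually goes beyond what the paper does; the paper does not address the nonvanishing of $C_1$ at all.
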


By applying the Tauberian theorem (see {\rm\cite[p.\,30, Theorem 15.3]{Kore04}} or {\rm\cite[p.\,107]{Taylor11.2}}), we can easily obtain the following Weyl-type law for the counting function of the thermoelastic Steklov eigenvalues:

\begin{corollary}
    Let $N(\tau):= \#\{k|\tau_k \leqslant \tau\}$ be the counting function of the thermoelastic Steklov eigenvalues. Then
    \begin{align*}
        N(\tau) &= \frac{\operatorname{vol}(\mathbb{S}^{n-2})\operatorname{vol}(\partial \Omega)}{n-1} 
        \biggl[ 
            \biggl( \frac{\lambda + 3\mu}{2\mu (\lambda + \mu)} \biggr)^{n-1} + \frac{1}{(2\mu)^{n-1}} + \frac{n-2}{\mu^{n-1}} + \frac{1}{\alpha^{n-1}}
        \biggr] \tau^{n-1} \\
        &\quad + o(\tau^{n-1}) \quad \text{as}\ \tau \to +\infty.
    \end{align*}
\end{corollary}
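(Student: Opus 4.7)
The plan is to derive this corollary as a direct Tauberian consequence of Theorem~\ref{thm1.1}; only the leading term of the heat trace expansion is needed, so essentially all of the analytic work has already been done.

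First, I would read off from Theorem~\ref{thm1.1} the leading behavior
\begin{equation*}
    \operatorname{Tr}(e^{-t\Lambda_g}) = t^{1-n}\int_{\partial\Omega} a_0(x)\,dS + o(t^{1-n}) \quad\text{as }t\to 0^+,
\end{equation*}
and observe that the density $a_0(x)$ given there is pointwise constant on $\partial\Omega$, depending only on $n,\lambda,\mu,\alpha$. Thus $\int_{\partial\Omega} a_0(x)\,dS = a_0\cdot\operatorname{vol}(\partial\Omega)$, and explicitly substituting the formula for $a_0$ expresses the leading coefficient in terms of $\operatorname{vol}(\mathbb{S}^{n-2})$, $\operatorname{vol}(\partial\Omega)$, and the bracketed combination of the constants $\lambda,\mu,\alpha$ appearing in the corollary.

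Next, since $\Lambda_g$ is self-adjoint, first order, and elliptic, with nonnegative discrete spectrum $\tau_k\to+\infty$ of finite multiplicity (as recalled in the introduction), the counting function $N(\tau)=\#\{k:\tau_k\leqslant\tau\}$ is a nondecreasing, right-continuous step function taking finite values. Rewriting the heat trace as a Laplace--Stieltjes transform,
\begin{equation*}
    \operatorname{Tr}(e^{-t\Lambda_g}) = \sum_{k=1}^\infty e^{-t\tau_k} = \int_{0^-}^{\infty} e^{-t\tau}\,dN(\tau),
\end{equation*}
places us in the precise setting of Karamata's Tauberian theorem (cited in the statement as \cite[p.\,30, Theorem~15.3]{Kore04} or \cite[p.\,107]{Taylor11.2}): if the Laplace--Stieltjes transform of such an $N$ satisfies $\int_{0^-}^{\infty} e^{-t\tau}\,dN(\tau)\sim A\,t^{-\alpha}$ as $t\to 0^+$ with $\alpha>0$, then $N(\tau)\sim \frac{A}{\Gamma(\alpha+1)}\tau^{\alpha}$ as $\tau\to+\infty$.

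Finally, I would apply this with $\alpha=n-1$ and $A=a_0\cdot\operatorname{vol}(\partial\Omega)$, using $\Gamma(n)=(n-1)\,\Gamma(n-1)$ to combine the $\Gamma(n-1)$ factor inside $a_0$ with the $\Gamma(n)$ in the denominator coming from Karamata. After this cancellation the stated Weyl-type formula drops out immediately. The argument is essentially frictionless: there is no real obstacle because the hard computational work — identifying the constant density $a_0(x)$ and proving the leading heat-trace asymptotic — is already carried out in Theorem~\ref{thm1.1}, and verifying the monotonicity and finiteness hypotheses of Karamata's theorem for $N$ is immediate from the basic spectral theory of $\Lambda_g$ recalled in the introduction.
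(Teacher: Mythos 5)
Your proposal is correct and follows essentially the same route as the paper, which likewise obtains the corollary by applying the Tauberian (Karamata) theorem to the leading term $t^{1-n}\int_{\partial\Omega}a_0\,dS$ of Theorem \ref{thm1.1} and gives no further detail. One caveat: carrying out your cancellation $\Gamma(n-1)/\Gamma(n)=1/(n-1)$ actually produces the constant $\frac{\operatorname{vol}(\mathbb{S}^{n-2})\operatorname{vol}(\partial\Omega)}{(n-1)(2\pi)^{n-1}}\bigl[\cdots\bigr]$, i.e.\ with a residual factor $(2\pi)^{1-n}$ that does not cancel, so the formula as printed in the corollary (which omits this factor) does not literally ``drop out''---the discrepancy lies in the paper's stated constant rather than in your method, but your claim of exact agreement should be qualified accordingly.
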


\addvspace{2mm}

The main ideas of this paper are as follows. Since the thermoelastic Dirichlet-to-Neumann map is a pseudodifferential operator defined on the boundary of a manifold, there is an effective method (see \cite{Liu19}) to calculate its full symbol. It is a novel idea of this paper that we separate the full symbols of the corresponding differential (or pseudodifferential) operators into two parts by analyzing the characteristics of the thermoelastic operator, one is the pure elastic term which has been obtained in \cite{Liu19}, the other is related to the thermal effect. Thus we can greatly simplify the complicated computations of the full symbols and the spectral invariants. By combining the two parts we can obtain the spectral invariants of the thermoelastic Dirichlet-to-Neumann map. Firstly, we flat the boundary and induce a Riemannian metric in a neighborhood of the boundary and give an important local representation of the thermoelastic Dirichlet-to-Neumann map $\Lambda_{g} = A(-\frac{\partial }{\partial x_n})-D$ in boundary normal coordinates, where $A$ and $D$ are two matrices (see \eqref{2.9}). We then look for the factorization
\begin{align*}
    A^{-1} \mathcal{L}_g 
    = I_{n+1}\frac{\partial^2 }{\partial x_n^2} + B \frac{\partial }{\partial x_n} + C
    = \Bigl(I_{n+1}\frac{\partial }{\partial x_n} + B - Q\Bigr)\Bigl(I_{n+1}\frac{\partial }{\partial x_n} + Q\Bigr),
\end{align*}
where $I_{n+1}$ is the $(n+1)\times (n+1)$ identity matrix, $B$, $C$ are two differential operators and $Q$ is a pseudodifferential operator. As a result, we obtain
\begin{align*}
    Q^2 - BQ - \Bigl[I_{n+1} \frac{\partial }{\partial x_n},Q\Bigr] + C = 0,
\end{align*}
where $[\cdot,\cdot]$ is the commutator. Next, let $b$, $c$ and $q$ be the full symbols of the operators $B$, $C$ and $Q$, respectively. We solve the full symbol equation
\begin{align*}
    \sum_{J} \frac{(-i)^{|J|}}{J !} \partial_{\xi}^{J}q \, \partial_{x^\prime}^{J}q - \sum_{J} \frac{(-i)^{|J|}}{J !} \partial_{\xi}^{J}b \, \partial_{x^\prime}^{J}q - \frac{\partial q}{\partial x_n} + c = 0,
\end{align*}
where the sum is over all multi-indices $J$, $\xi=(\xi_1,\dots,\xi_{n-1})$ and $x^\prime=(x_1,\dots,x_{n-1})$. Note that the above equation is a quadratic matrix equation. Generally, it can not be exactly solved, in other words, there is not a general formula of the solution represented by the coefficients of the matrix equation. Since these are $(n+1)\times(n+1)$ matrices in the above full symbol equation, so we develop the method of the previous work \cite{Liu19} to deal with the complicated matrix equation and the computations of symbols. Fortunately, by combining the method in \cite{Liu19} we get the exact solution in this setting. It follows that the trace $\operatorname{Tr} (e^{-t \Lambda_{g}})$ of the associated heat kernel $\textbf{\textit{K}}(t,x^{\prime},y^{\prime})$ admits the asymptotic expansion
\begin{align*}
    \sum_{k=1}^{\infty} e^{-t \tau_{k}}
    & = \int_{\partial \Omega} \textbf{\textit{K}}(t,x^{\prime},x^{\prime}) \,dS \\
    & = \int_{\partial \Omega}
    \bigg[ 
        \frac{1}{(2\pi)^{n-1}} \int_{T^{*}(\partial \Omega)} e^{i \langle x^{\prime} - x^{\prime},\xi \rangle}
        \bigg( 
            \frac{i}{2\pi}\int_{\mathcal{C}} e^{-t\tau}\sigma\bigl((\Lambda_g-\tau I)^{-1}\bigr) \,d\tau
        \bigg) \,d\xi
    \bigg] \,dS \\
    & \sim \sum_{k=0}^{\infty} a_k t^{-n+k+1} + \sum_{l=0}^{\infty} b_{l} t^l \log t \quad \text{as}\ t \to 0^{+},
\end{align*}
where $\mathcal{C}$ is a contour around the positive real axis and $\sigma\bigl((\Lambda_g-\tau I)^{-1}\bigr)$ is the full symbol of the resolvent operator $(\Lambda_g-\tau I)^{-1}$. Finally, we establish an effective procedure to calculate all the coefficients $a_k\ (0\leqslant k \leqslant n-1)$ and explicitly give the first two coefficients.

This paper is organized as follows. In Section \ref{s2} we give the expression of the thermoelastic Dirichlet-to-Neumann map in boundary normal coordinates. In Section \ref{s3} we derive a factorization of the thermoelastic operator and get the full symbols of the pseudodifferential operators. Finally, Section \ref{s4} is devoted to prove Theorem \ref{thm1.1}.

\addvspace{10mm}

\section{Preliminaries}\label{s2}

\addvspace{5mm}

In the local coordinates $\{x_i\}_{i=1}^n$, we denote by $\bigl\{\frac{\partial}{\partial x_i}\bigr\}_{i=1}^n$ a natural basis for the tangent space $T_x \Omega$ at the point $x \in \Omega$. In what follows, we will use the Einstein summation convention, the Greek indices run from 1 to $n-1$, whereas Roman indices run from 1 to $n$, unless otherwise specified. Then the Riemannian metric is given by $g = g_{ij} \,dx_i\, dx_j$. Let $\nabla$ be the Levi-Civita connection on $\Omega$, $\nabla_i = \nabla_{\frac{\partial}{\partial x_i}}$ and $\nabla^i= g^{ij} \nabla_j$. The gradient operator is denoted by
\begin{equation}\label{2.1}
    \operatorname{grad} f = \nabla^i f \frac{\partial}{\partial x_i}
    = g^{ij} \frac{\partial f}{\partial x_i} \frac{\partial}{\partial x_j},\quad f \in C^{\infty}(\Omega),
\end{equation}
where $(g^{ij}) = (g_{ij})^{-1}$. The divergence operator is denoted by
\begin{equation}\label{2.2}
    \operatorname{div} \textbf{\textit{u}} = \nabla_i u^i = \frac{\partial u^i}{\partial x_i} + \Gamma^j_{ij} u^i,\quad \textbf{\textit{u}}=u^i \frac{\partial}{\partial x_i}\in \mathfrak{X}  (\Omega),
\end{equation}
where the Christoffel symbols $\Gamma^{k}_{ij} = \frac{1}{2} g^{kl} \bigl(\frac{\partial g_{jl}}{\partial x_i} + \frac{\partial g_{il}}{\partial x_j} - \frac{\partial g_{ij}}{\partial x_l}\bigr)$. It follows from \eqref{2.1} and \eqref{2.2} that
\begin{align}\label{2.3}
    \operatorname{grad}\operatorname{div} \textbf{\textit{u}} =  g^{ij}
    \Bigl( \frac{\partial^2 u^k}{\partial x_j \partial x_k} +  \Gamma^l_{kl}\frac{\partial u^k}{\partial x_j} +  \frac{\partial \Gamma^l_{kl}}{\partial x_j} u^k \Bigr) \frac{\partial}{\partial x_i}.
\end{align}

Recall that the components of the curvature tensor
\begin{align}\label{2.4}
    R^{l}_{ijk} 
    = \frac{\partial \Gamma^{l}_{jk}}{\partial x_i} - \frac{\partial \Gamma^{l}_{ik}}{\partial x_j} + \Gamma^{h}_{jk} \Gamma^{l}_{ih} - \Gamma^{h}_{ik} \Gamma^{l}_{jh},
\end{align}
and $R_{ijkl} = g_{lm} R^{m}_{ijk}$, the components of the the Ricci tensor
\begin{align}\label{2.5}
    R_{ij} = g^{kl} R_{iklj},
\end{align}
and $R=g^{ij}R_{ij}$ is the scalar curvature. The Bochner Laplacian is given by
\begin{align*}
    \Delta_{B}\textbf{\textit{u}} =(\nabla^j \nabla_j u^i)\frac{\partial}{\partial x_i} ,\quad \textbf{\textit{u}}=u^i \frac{\partial}{\partial x_i}\in \mathfrak{X}  (\Omega).
\end{align*}
Then combining \eqref{2.4} and \eqref{2.5}, one has
\begin{align}\label{2.6}
    (\Delta_{B}\textbf{\textit{u}})^i 
    & = \Delta_g u^i + g^{kl} 
    \Bigl(
        2\Gamma^i_{jk}\frac{\partial u^j}{\partial x_l} + \frac{\partial \Gamma^i_{jk}}{\partial x_l}u^j + (\Gamma^i_{hl}\Gamma^h_{jk} - \Gamma^h_{kl}\Gamma^i_{jh}) u^j
    \Bigr) \\
    & = \Delta_g u^i - \operatorname{Ric}(\textbf{\textit{u}})^i + g^{kl} \Bigl( 2\Gamma^i_{jk} \frac{\partial u^j}{\partial x_l} + \frac{\partial \Gamma^i_{kl}}{\partial x_j} u^j \Bigr), \notag
\end{align}
where $\operatorname{Ric}(\textbf{\textit{u}})^i = g^{ij}R_{jk} u^k$ and the Laplace--Beltrami operator is given by
\begin{equation}\label{2.7}
    \Delta_{g} f
    = g^{ij}\Bigl(\frac{\partial^2 f}{\partial x_i \partial x_j} - \Gamma^k_{ij}\frac{\partial f}{\partial x_k}\Bigr), \quad f \in C^{\infty}(\Omega).
\end{equation}

\addvspace{2mm}

Here we briefly introduce the construction of geodesic coordinates with respect to the boundary (see \cite{LeeUhlm89} or \cite[p.\,532]{Taylor11.2}). For each point $x^{\prime} \in \partial \Omega$, let $\gamma_{x^{\prime}}:[0,\varepsilon)\to \bar{\Omega}$ denote the unit-speed geodesic starting at $x^{\prime}$ and normal to $\partial \Omega$. If $ x^{\prime} := \{x_{1}, \ldots, x_{n-1}\}$ are any local coordinates for $\partial \Omega$ near $x_0 \in \partial \Omega$, we can extend them smoothly to functions on a neighborhood of $x_0$ in $\bar{\Omega}$ by letting them be constant along each normal geodesic $\gamma_{x^{\prime}}$. If we then define $x_n$ to be the parameter along each $\gamma_{x^{\prime}}$, it follows easily that $\{x_{1}, \ldots, x_{n}\}$ form coordinates for $\bar{\Omega}$ in some neighborhood of $x_0$, which we call the boundary normal coordinates determined by $\{x_{1}, \ldots, x_{n-1}\}$. In these coordinates $x_n>0$ in $\Omega$, and $\partial \Omega$ is locally characterized by $x_n=0$. A standard computation shows that the metric then has the form $g = g_{\alpha\beta} \,dx_{\alpha} \,dx_{\beta} + dx_{n}^{2}$ and
\begin{align*}
    & g_{jk}(x_0) = g^{jk}(x_0) = \delta_{jk},\quad 1 \leqslant j,k \leqslant n, \\
    & \frac{\partial g_{jk}}{\partial x_\alpha}(x_0) = \frac{\partial g^{jk}}{\partial x_\alpha}(x_0) = 0, \quad 1 \leqslant \alpha \leqslant n-1,
\end{align*}
where $\delta_{jk}$ is the Kronecker delta. Furthermore, we can choose a frame that diagonalizes the second fundamental form such that at the $x_0 \in \partial \Omega$,
\begin{align*}
    H =\sum_\alpha \kappa_\alpha 
    = -\Gamma^{\alpha}_{n\alpha}
    = -\frac{1}{2}\frac{\partial g_{\alpha\alpha}}{\partial x_n},
\end{align*}
where $\kappa_\alpha\ (1\leqslant \alpha \leqslant n-1)$ are the principal curvatures, $H$ is the mean curvature of $\partial \Omega$.

For the sake of simplicity, we denote by $I_n$ the $n\times n$ identity matrix,
\begin{align*}
    (a^\alpha_\beta):=
    \begin{pmatrix}
        a^1_1 & \dots & a^1_{n-1} \\
        \vdots & \ddots & \vdots \\
        a^{n-1}_1 & \dots & a^{n-1}_{n-1} \\
    \end{pmatrix},
\end{align*}
and
\begin{align*}
    \begin{pmatrix}
        (a^j_k) & (b^j) \\[1mm]
        (c_k) & d
    \end{pmatrix}
    :=
    \begin{pmatrix}
        (a^\alpha_\beta) & (a^\alpha_n) & (b^\alpha) \\[1mm]
        (a^n_\beta) & a^n_n & b^n \\[1mm]
        (c_\beta) & c_n & d
    \end{pmatrix}
    =
    \begin{pmatrix}
        a^1_1 & \dots & a^1_n & b^1 \\
        \vdots & \ddots & \vdots & \vdots \\
        a^n_1 & \dots & a^n_n & b^n \\
        c_1 & \dots & c_n & d \\
    \end{pmatrix},
\end{align*}
where $1 \leqslant j,k \leqslant n$ and $ 1 \leqslant \alpha,\beta \leqslant n-1$.

\begin{proposition}
    In the boundary normal coordinates, the thermoelastic Dirichlet-to-Neumann map $\Lambda_{g}$ can be written as
    \begin{align}\label{2.8}
        \Lambda_{g} = A\Bigl(-\frac{\partial }{\partial x_n}\Bigr)-D,
    \end{align}
    where
    \begin{align}\label{2.9}
        A=
        \begin{pmatrix}
            \mu I_{n-1} &0 &0  \\
            0& \lambda+2\mu &0\\
            0& 0& \alpha
        \end{pmatrix},\quad 
        D=
        \begin{pmatrix}
            0 & \mu \bigl(g^{\alpha\beta}\frac{\partial }{\partial x_\beta}\bigr) & 0\\
            \lambda \bigl(\frac{\partial }{\partial x_\beta} + \Gamma^\alpha_{\alpha\beta} \bigr) & \lambda \Gamma^\alpha_{\alpha n} & -\beta\\
            0 & 0 & 0
        \end{pmatrix}.
    \end{align}
\end{proposition}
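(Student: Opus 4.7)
The plan is to unfold the definition \eqref{1.4} of $\Lambda_g$ in boundary normal coordinates, compute each entry of the resulting $(n+1)\times(n+1)$ operator matrix, and compare the outcome with $A\bigl(-\partial/\partial x_n\bigr)-D$ from \eqref{2.9}. Three features of boundary normal coordinates drive every step: the outward unit normal at $\partial\Omega$ satisfies $\nu=-\partial/\partial x_n$, so $\nu^n=-1$ and $\nu^\alpha=0$; the identities $g_{nn}\equiv 1$ and $g_{n\alpha}\equiv 0$ hold in a whole neighborhood of $\partial\Omega$, which forces $\Gamma^n_{nn}=\Gamma^n_{n\alpha}=\Gamma^\alpha_{nn}=0$; and the remaining Christoffel symbols along the normal direction take the explicit form $\Gamma^n_{\alpha\beta}=-\tfrac12\partial_n g_{\alpha\beta}$ and $\Gamma^\alpha_{n\beta}=\tfrac12 g^{\alpha\gamma}\partial_n g_{\gamma\beta}$, which satisfy the key identity $g^{\alpha\beta}\Gamma^n_{\beta\gamma}=-\Gamma^\alpha_{n\gamma}$.

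I would dispose of the easy entries first. The heat-flux trace $\alpha\partial_\nu\theta=-\alpha\partial_n\theta$ immediately produces the $(n{+}1,n{+}1)$ entry of $A$ with no contribution to $D$. The thermal coupling $-\beta\theta\nu$ contributes nothing to the tangential components (since $\nu^\alpha=0$) and $\beta\theta$ to the normal one, accounting for the $-\beta$ at position $(n,n{+}1)$ of $D$. The bottom row of $\Lambda_g$ in the displacement block vanishes because the temperature PDE only produces a scalar Neumann-type trace.

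The substantive computation is the elastic traction $\lambda(\operatorname{div}\mathbf{u})\nu+\mu(S\mathbf{u})\nu$. For a tangential index $\alpha$ the $\lambda$-term is absent and one obtains
\begin{align*}
((S\mathbf{u})\nu)^\alpha
=-g^{\alpha\beta}\nabla_\beta u^n-\nabla_n u^\alpha
=-g^{\alpha\beta}\partial_\beta u^n-\partial_n u^\alpha-g^{\alpha\beta}\Gamma^n_{\beta\gamma}u^\gamma-\Gamma^\alpha_{n\beta}u^\beta.
\end{align*}
The last two Christoffel-symbol terms cancel by the identity $g^{\alpha\beta}\Gamma^n_{\beta\gamma}=-\Gamma^\alpha_{n\gamma}$, and after multiplication by $\mu$ this reproduces exactly the $\alpha$-row of $A(-\partial/\partial x_n)-D$. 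For the normal component, $((S\mathbf{u})\nu)^n=-2\partial_n u^n$ (all Christoffel contributions vanish), while \eqref{2.2} gives $\operatorname{div}\mathbf{u}=\partial_\alpha u^\alpha+\partial_n u^n+\Gamma^\beta_{\alpha\beta}u^\alpha+\Gamma^\alpha_{n\alpha}u^n$. Assembling these yields
\begin{align*}
-(\lambda+2\mu)\partial_n u^n-\lambda\bigl(\partial_\beta u^\beta+\Gamma^\alpha_{\alpha\beta}u^\beta+\Gamma^\alpha_{\alpha n}u^n\bigr)+\beta\theta,
\end{align*}
which matches the $n$-th row of $A(-\partial/\partial x_n)-D$ read off from \eqref{2.9}.

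The main obstacle is the cancellation of the two Christoffel-symbol terms in the tangential traction: this is what makes the off-diagonal entries of $D$ take the clean form recorded in \eqref{2.9}. It is crucial that $g_{nn}\equiv 1$ and $g_{n\alpha}\equiv 0$ hold throughout a neighborhood of $\partial\Omega$, not merely at a single point $x_0$; otherwise, extra terms of shape $\Gamma^\alpha_{n\beta}u^\beta$ would survive and spoil the block structure. Once this cancellation is in place, the remaining steps reduce to bookkeeping with the divergence formula \eqref{2.2} and the normal derivatives of the metric.
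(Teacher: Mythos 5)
Your proof is correct and follows essentially the same route as the paper: expand the definition \eqref{1.4} in boundary normal coordinates with $\nu=-\partial/\partial x_n$, use $\Gamma^n_{nk}=\Gamma^k_{nn}=0$ together with the cancellation $g^{\alpha\beta}\Gamma^n_{\beta\gamma}+\Gamma^\alpha_{n\gamma}=0$ to reduce $(\nu S\textbf{\textit{u}})^\alpha$ and $(\nu S\textbf{\textit{u}})^n$ to $-g^{\alpha\beta}\partial_\beta u^n-\partial_n u^\alpha$ and $-2\partial_n u^n$, and then assemble with the divergence formula \eqref{2.2} and the thermal terms. Your write-up merely makes explicit the final bookkeeping (the $\lambda\operatorname{div}$ row and the $\beta$, $\alpha\partial_\nu$ entries) that the paper leaves as "combining \eqref{1.4}, \eqref{2.1}, \eqref{2.2}, \eqref{2.10} and \eqref{2.11}."
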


\addvspace{2mm}

\begin{proof}
    By the definition of the stress tensor \eqref{1.7} (see \cite[p.\,562]{Taylor11.3}), we have
    \begin{align}\label{2.12}
        (S\textbf{\textit{u}})^i_j = \nabla^i u_j + \nabla_j u^i,
    \end{align}
    where $u_j=g_{ij}u^i$. Then
    \begin{align*}
        (\nu S\textbf{\textit{u}})^i =\nu^j (S\textbf{\textit{u}})^i_j
        &=\nu^j(\nabla^i u_j + \nabla_j u^i)
    \end{align*}
    In the boundary normal coordinates, we take $\nu=(0,\dots,0,-1)^t$ and $\partial_\nu = -\partial_{x_n}$. In particular, $u_n=u^n$ since $g_{in}=\delta_{in}$ in the boundary normal coordinates. We get
    \begin{align*}
        (\nu S\textbf{\textit{u}})^i = -(\nabla^i u_n + \nabla_n u^i).
    \end{align*}
    Note that $\Gamma^n_{nk}=\Gamma^k_{nn}=0$ and $g^{\alpha\beta}\Gamma^n_{\beta\gamma}+\Gamma^\alpha_{n\gamma}=0$ in the boundary normal coordinates. Thus
    \begin{align}\label{2.10}
        (\nu S\textbf{\textit{u}})^\alpha
        &=-(\nabla^\alpha u_n + \nabla_n u^\alpha)\\
        &=-\Bigl[g^{\alpha\beta}\Bigl(\frac{\partial u^n}{\partial x_\beta}+\Gamma^n_{\beta\gamma}u^\gamma\Bigr)+\frac{\partial u^\alpha}{\partial x_n}+\Gamma^\alpha_{n\gamma}u^\gamma\Bigr]\notag\\
        &=-g^{\alpha\beta}\frac{\partial u^n}{\partial x_\beta}-\frac{\partial u^\alpha}{\partial x_n},\notag\\
        (\nu S\textbf{\textit{u}})^n 
        &=-(\nabla^n u_n + \nabla_n u^n)=-2\frac{\partial u^n}{\partial x_n}.\label{2.11}
    \end{align}
    Hence, we immediately obtain \eqref{2.8} by combining \eqref{1.4}, \eqref{2.1}, \eqref{2.2}, \eqref{2.10} and \eqref{2.11}.
\end{proof}

\addvspace{10mm}

\section{Symbols of the Pseudodifferential Operators}\label{s3}

\addvspace{5mm}

In the boundary normal coordinates, we write the Laplace--Beltrami operator as
\begin{align}\label{3.1}
    \Delta_g
    & = \frac{\partial^2 }{\partial x_n^2} + \Gamma^\alpha_{n\alpha} \frac{\partial }{\partial x_n} + g^{\alpha\beta} \frac{\partial^2}{\partial x_\alpha\partial x_\beta} +
    \biggl(
        g^{\alpha\beta} \Gamma^\gamma_{\gamma\alpha} + \frac{\partial g^{\alpha\beta}}{\partial x_\alpha}
    \biggr)
    \frac{\partial }{\partial x_\beta}.
\end{align}
According to \eqref{1.2}, \eqref{2.1}, \eqref{2.2}, \eqref{2.3}, \eqref{2.6}, \eqref{2.7} and \eqref{3.1}, we deduce that (cf. \cite{Liu19})
\begin{align}\label{3.2}
    A^{-1} \mathcal{L}_g = I_{n+1}\frac{\partial^2 }{\partial x_n^2} + B \frac{\partial }{\partial x_n} + C,
\end{align}
where $A$ is given by \eqref{2.9}, $B=B_1+B_0$, $C =C_2+C_1+C_0$, and
\begin{align*}
    B_1&=(\lambda+\mu)
    \begin{pmatrix}
        0 & \frac{1}{\mu}\bigl(g^{\alpha\beta}\frac{\partial}{\partial x_{\beta}}\bigr) & 0 \\
        \frac{1}{\lambda+2\mu}\bigl(\frac{\partial}{\partial x_{\beta}}\bigr) & 0 & 0\\
        0 & 0 & 0
    \end{pmatrix},\quad 
    B_0=
    \begin{pmatrix}
        \Gamma^\alpha_{n\alpha}I_{n-1}+2(\Gamma^\alpha_{n\beta}) & 0 & 0 \\[1mm]
        \frac{\lambda+\mu}{\lambda+2\mu}\bigl(\Gamma^\alpha_{\alpha\beta}\bigr) & \Gamma^\alpha_{n\alpha} & - \frac{\beta}{\lambda+2\mu}\\[1mm]
        0 & \frac{i\omega\beta\theta_0}{\alpha} & \Gamma^\alpha_{n\alpha}
    \end{pmatrix},\\[1mm]
    C_2&=
    \begin{pmatrix}
        g^{\alpha\beta}\frac{\partial^2 }{\partial x_\alpha \partial x_\beta}I_{n-1} + \frac{\lambda+\mu}{\mu}\bigl(g^{\alpha\gamma}\frac{\partial^2 }{\partial x_\alpha \partial x_\beta}\bigr) & 0 & 0\\
        0 & \frac{\mu}{\lambda+2\mu}g^{\alpha\beta}\frac{\partial^2 }{\partial x_\alpha \partial x_\beta} & 0\\
        0 & 0 & g^{\alpha\beta}\frac{\partial^2 }{\partial x_\alpha \partial x_\beta}
    \end{pmatrix},\\[1mm]
    C_1&=
    \begin{pmatrix}
        \bigl(g^{\alpha\beta}\Gamma^\gamma_{\alpha\gamma}+\frac{\partial g^{\alpha\beta}}{\partial x_{\alpha}}\bigr)\frac{\partial }{\partial x_{\beta}} I_{n-1} & 0 & 0 \\
        0 & \frac{\mu}{\lambda+2\mu}\bigl(g^{\alpha\beta}\Gamma^\gamma_{\alpha\gamma}+\frac{\partial g^{\alpha\beta}}{\partial x_{\alpha}}\bigr)\frac{\partial }{\partial x_{\beta}} & 0 \\
        0 & 0 & \bigl(g^{\alpha\beta}\Gamma^\gamma_{\alpha\gamma}+\frac{\partial g^{\alpha\beta}}{\partial x_{\alpha}}\bigr)\frac{\partial }{\partial x_{\beta}}
    \end{pmatrix}\\[1mm]
    &\quad +\frac{\lambda+\mu}{\mu}
    \begin{pmatrix}
        \bigl(g^{\beta\gamma}\Gamma^\alpha_{\alpha\rho}\frac{\partial }{\partial x_{\beta}}\bigr) & \bigl(g^{\beta\gamma}\Gamma^\alpha_{n\alpha}\frac{\partial }{\partial x_{\beta}}\bigr) & 0\\
        0 & 0 & 0\\
        0 & 0 & 0
    \end{pmatrix}\\[1mm]
    &\quad +
    \begin{pmatrix}
        2\bigl(g^{\alpha\beta}\Gamma^\rho_{\beta\gamma}\frac{\partial }{\partial x_{\alpha}}\bigr) & 2\bigl(g^{\alpha\beta}\Gamma^\rho_{\beta n}\frac{\partial }{\partial x_{\alpha}}\bigr) & -\frac{\beta}{\mu}\bigl(g^{\alpha\beta}\frac{\partial }{\partial x_{\beta}}\bigr) \\[2mm]
        \frac{2\mu}{\lambda+2\mu}\bigl(g^{\alpha\beta}\Gamma^n_{\beta\gamma}\frac{\partial }{\partial x_{\alpha}}\bigr) & 0 & 0\\[2mm]
        \frac{i\omega\beta\theta_0}{\alpha}\bigl(\frac{\partial }{\partial x_{\beta}}\bigr) & 0 & 0
    \end{pmatrix},\\[1mm]
    C_0 &=(\lambda+\mu)
    \begin{pmatrix}
        \frac{1}{\mu}\bigl(g^{\gamma\rho}\frac{\partial \Gamma^\alpha_{\alpha\beta}}{\partial x_\rho}\bigr) & \frac{1}{\mu}\bigl(g^{\gamma\rho}\frac{\partial \Gamma^\alpha_{\alpha n}}{\partial x_\rho}\bigr) & 0\\[2mm]
        \frac{1}{\lambda+2\mu}\bigl(\frac{\partial \Gamma^\alpha_{\alpha\beta}}{\partial x_n}\bigr) & \frac{1}{\lambda+2\mu}\frac{\partial \Gamma^\alpha_{\alpha n}}{\partial x_n} & 0\\[2mm]
        0 & 0 & 0
    \end{pmatrix}
    +
    \begin{pmatrix}
        \bigl(g^{ml}\frac{\partial \Gamma^\alpha_{ml}}{\partial x_\beta}\bigr) & \bigl(g^{ml}\frac{\partial \Gamma^\alpha_{ml}}{\partial x_n}\bigr) & 0\\[2mm]
        \frac{\mu}{\lambda+2\mu}\bigl(g^{ml}\frac{\partial \Gamma^n_{ml}}{\partial x_\beta}\bigr) & \frac{\mu}{\lambda+2\mu}g^{ml}\frac{\partial \Gamma^n_{ml}}{\partial x_n} & 0\\[2mm]
        0 & 0 & 0
    \end{pmatrix}\\[1mm]
    &\quad +
    \begin{pmatrix}
        \frac{\rho\omega^2}{\mu}I_{n-1} & 0 & 0\\[2mm]
        0 & \frac{\rho\omega^2}{\lambda+2\mu} & 0\\[2mm]
        \frac{i\omega\beta\theta_0}{\alpha}(\Gamma^\alpha_{\alpha\beta}) & \frac{i\omega\beta\theta_0}{\alpha}\Gamma^\alpha_{\alpha n} & \frac{i\omega \alpha}{\alpha}
    \end{pmatrix}.
\end{align*}

\addvspace{2mm}

We then derive the microlocal factorization of the thermoelastic operator $\mathcal{L}_g$.
\begin{proposition}\label{prop3.1}
    There exists a pseudodifferential operator $Q(x,\partial_{x^\prime})$ of order one in $x^\prime$ depending smoothly on $x_n$ such that
    \begin{align*}
        A^{-1}\mathcal{L}_g
        = \Bigl(I_{n+1}\frac{\partial }{\partial x_n} + B - Q\Bigr)\Bigl(I_{n+1}\frac{\partial }{\partial x_n} + Q\Bigr)
    \end{align*}
    modulo a smoothing operator.
\end{proposition}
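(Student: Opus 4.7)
The plan is to expand the proposed factorisation, identify the resulting operator equation for $Q$, and then solve this equation at the level of full symbols, building $Q$ as an asymptotic sum of homogeneous symbols. Carrying out the composition on the right-hand side one gets
\[
\Bigl(I_{n+1}\frac{\partial }{\partial x_n} + B - Q\Bigr)\Bigl(I_{n+1}\frac{\partial }{\partial x_n} + Q\Bigr) = I_{n+1}\frac{\partial^2 }{\partial x_n^2} + B\frac{\partial }{\partial x_n} + BQ - Q^2 + \Bigl[I_{n+1}\frac{\partial}{\partial x_n},Q\Bigr],
\]
so comparing with \eqref{3.2} reduces the proposition to finding a first-order pseudodifferential operator $Q(x,\partial_{x'})$ on the boundary, depending smoothly on the parameter $x_n$, that satisfies
\[
Q^2 - BQ - \Bigl[I_{n+1}\frac{\partial}{\partial x_n},Q\Bigr] + C = 0
\]
modulo a smoothing operator.

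Passing to full symbols via the standard matrix composition formula, and using the decompositions $b=b_1+b_0$, $c=c_2+c_1+c_0$ already read off in Section~\ref{s3}, I would write $q \sim q_1 + q_0 + q_{-1} + \cdots$ with $q_j$ homogeneous of degree $j$ in $\xi$, and collect terms of the same degree. The degree-two identity is the matrix quadratic
\[
q_1^2 - b_1 q_1 + c_2 = 0,
\]
while, for every $k\geqslant 1$, the degree-$(2-k)$ identity rearranges into a Sylvester equation
\[
(q_1 - b_1)\,q_{1-k} + q_{1-k}\,q_1 = F_{1-k},
\]
whose right-hand side $F_{1-k}$ depends only on the previously determined symbols $q_1, q_0, \ldots, q_{2-k}$ and their $(x',\xi,x_n)$ derivatives.

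The main obstacle is the leading matrix quadratic: since $q_1$ is an $(n+1)\times(n+1)$ matrix-valued symbol, there is no universal closed-form root. Here I would exploit the block structure of $A^{-1}\mathcal{L}_g$ visible in \eqref{1.2}: outside the last row and column, which house the thermal coupling via the constants $\beta$ and $\alpha$, the symbols $b_1$ and $c_2$ coincide with the Navier--Lam\'e symbols already diagonalised in \cite{Liu19}. Writing $q_1$ as that elastic root plus a correction supported in the thermal row/column and substituting back into $q_1^2 - b_1 q_1 + c_2 = 0$, I would reduce the problem to a much smaller linear system for the correction that can be solved by hand. The branch is fixed by requiring $\operatorname{spec}(q_1(x',\xi))$ to lie in the right half-plane, which both singles out the physically correct parabolic right factor and ensures that $q_1$ and $b_1 - q_1$ have disjoint spectra, so that each Sylvester equation above is uniquely solvable for $q_{1-k}$.

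Finally, having constructed the full sequence $\{q_{1-k}\}_{k\geqslant 0}$ by downward induction, I would invoke Borel's asymptotic summation lemma for matrix-valued symbols to select a single pseudodifferential operator $Q$ with $\sigma(Q)\sim\sum_{k\geqslant 0} q_{1-k}$. By construction the total symbol of $Q^2 - BQ - [I_{n+1}\partial_{x_n},Q] + C$ then vanishes to infinite order in $\xi$, so this operator is smoothing, yielding the desired factorisation of $A^{-1}\mathcal{L}_g$ modulo a smoothing operator.
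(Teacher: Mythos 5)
Your proposal is correct and follows essentially the same route as the paper: expanding the factorisation to obtain the quadratic operator equation $Q^2 - BQ - [I_{n+1}\partial_{x_n},Q] + C = 0$, passing to the full symbol equation, solving the leading matrix quadratic $q_1^2 - b_1 q_1 + c_2 = 0$ by augmenting the elastic root of \cite{Liu19} with the (decoupled) thermal entry, and determining the lower-order symbols recursively through Sylvester equations. The only differences are presentational: you make explicit the spectral-disjointness condition for unique solvability of the Sylvester equations and the Borel summation step, which the paper leaves implicit.
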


\addvspace{2mm}

\begin{proof}
    It follows from \eqref{3.2} that
    \begin{align*}
        I_{n+1}\frac{\partial^2 }{\partial x_n^2} + B \frac{\partial }{\partial x_n} + C 
        = \Bigl(I_{n+1}\frac{\partial }{\partial x_n} + B - Q\Bigr)\Bigl(I_{n+1}\frac{\partial }{\partial x_n} + Q\Bigr).
    \end{align*}
    Equivalently,
    \begin{align}\label{3.3}
        Q^2 - BQ - \Bigl[I_{n+1}\frac{\partial }{\partial x_n},Q\Bigr] + C = 0,
    \end{align}
    where the commutator $\big[I_{n+1}\frac{\partial }{\partial x_n},Q\big]$ is defined by
    \begin{align*}
        \Bigl[I_{n+1}\frac{\partial }{\partial x_n},Q\Bigr]f
        = I_{n+1}\frac{\partial }{\partial x_n}(Qf) - Q \Bigl(I_{n+1}\frac{\partial }{\partial x_n}\Bigr)f 
        = \frac{\partial Q}{\partial x_n}f,\quad f\in C^{\infty}(\Omega).
    \end{align*}

    Let $q = q(x,\xi)$ be the full symbol of the operator $Q$, we write $q(x,\xi) \sim \sum_{j\leqslant 1} q_j(x,\xi)$ with $q_j(x,\xi)$ homogeneous of degree $j$ in $\xi$. Let $b(x,\xi)=b_1(x,\xi) + b_0(x,\xi)$ and $c(x,\xi) = c_2(x,\xi) + c_1(x,\xi) + c_0(x,\xi)$ be the full symbols of $B$ and $C$, respectively. We denote $|\xi|^2=g^{\alpha\beta} \xi_\alpha\xi_\beta$, thus $b_0=B_0$, $c_0 =C_0$ and
    \begin{align*}
        &b_1=i(\lambda+\mu)
        \begin{pmatrix}
            0 & \frac{1}{\mu}(g^{\alpha\beta}\xi_\beta) & 0 \\
            \frac{1}{\lambda+2\mu}(\xi_\beta) & 0 & 0\\
            0 & 0 & 0
        \end{pmatrix},\
        c_2=
        \begin{pmatrix}
            -|\xi|^2 I_{n-1} - \frac{\lambda+\mu}{\mu}(g^{\alpha\gamma}\xi_\alpha \xi_\beta) & 0 & 0\\
            0 & -\frac{\mu}{\lambda+2\mu}|\xi|^2 & 0\\
            0 & 0 & -|\xi|^2
        \end{pmatrix},\\[1mm]
        &c_1=
        \begin{pmatrix}
            i\bigl(g^{\alpha\beta}\Gamma^\gamma_{\alpha\gamma}+\frac{\partial g^{\alpha\beta}}{\partial x_{\alpha}}\bigr)\xi_{\beta} I_{n-1} & 0 & 0 \\
            0 & \frac{i\mu}{\lambda+2\mu}\bigl(g^{\alpha\beta}\Gamma^\gamma_{\alpha\gamma}+\frac{\partial g^{\alpha\beta}}{\partial x_{\alpha}}\bigr)\xi_{\beta} & 0 \\
            0 & 0 & i\bigl(g^{\alpha\beta}\Gamma^\gamma_{\alpha\gamma}+\frac{\partial g^{\alpha\beta}}{\partial x_{\alpha}}\bigr)\xi_{\beta}
        \end{pmatrix}\\[1mm]
        &\quad +\frac{i(\lambda+\mu)}{\mu}
        \begin{pmatrix}
            (g^{\beta\gamma}\Gamma^\alpha_{\alpha\rho}\xi_{\beta}) & (g^{\beta\gamma}\Gamma^\alpha_{n\alpha}\xi_{\beta}) & 0\\
            0 & 0 & 0\\
            0 & 0 & 0
        \end{pmatrix} +
        \begin{pmatrix}
            2i(g^{\alpha\beta}\Gamma^\rho_{\beta\gamma}\xi_{\alpha}) & 2i(g^{\alpha\beta}\Gamma^\rho_{\beta n}\xi_{\alpha}) & -\frac{i\beta}{\mu}(g^{\alpha\beta}\xi_{\beta}) \\[2mm]
            \frac{2i\mu}{\lambda+2\mu}(g^{\alpha\beta}\Gamma^n_{\beta\gamma}\xi_{\alpha}) & 0 & 0\\[2mm]
            -\frac{\omega\beta\theta_0}{\alpha}(\xi_{\beta}) & 0 & 0
        \end{pmatrix}.
    \end{align*} 

    \addvspace{2mm}
    
    Recall that if $F$ and $G$ are two pseudodifferential operators with full symbols $f$ and $g$, respectively, then the full symbol $\sigma(FG)$ of the operator $FG$ is given by (see \cite[p.\,11]{Taylor11.2}, or \cite[p.\,71]{Hormander85.3}, see also \cite{Grubb86,Treves80})
    \begin{align*}
        \sigma(FG)\sim \sum_{J} \frac{(-i)^{|J|}}{J !} \partial_{\xi}^{J}f \, \partial_{x}^{J}g,
    \end{align*}
    where the sum is over all multi-indices $J$. We conclude that the full symbol equation of \eqref{3.3} is
    \begin{equation}\label{3.4}
        \sum_{J} \frac{(-i)^{|J|}}{J !} \partial_{\xi}^{J}q \, \partial_{x^\prime}^{J}q - \sum_{J} \frac{(-i)^{|J|}}{J !} \partial_{\xi}^{J}b \, \partial_{x^\prime}^{J}q - \frac{\partial q}{\partial x_n} + c = 0.
    \end{equation}

    We shall determine $q_j$ recursively so that \eqref{3.4} holds modulo $S^{-\infty}$. Grouping the homogeneous terms of degree two in \eqref{3.4}, one has
    \begin{align*}
        q_1^2-b_1q_1+c_2=0.
    \end{align*}
    We use the method in \cite{Liu19} to solve the above quadratic matrix equation, then (cf. (3.21) in \cite{Liu19})
    \begin{align}\label{3.5}
        q_1=
        \begin{pmatrix}
            |\xi|I_{n-1} + \frac{s_{1}}{|\xi|} (g^{\alpha\beta}\xi_\alpha\xi_\gamma) & i s_{1} (g^{\alpha\beta}\xi_\beta) & 0\\[1mm]
            i s_{1} (\xi_\beta) & (1-s_{1})|\xi| & 0\\[1mm]
            0 & 0 & |\xi|
        \end{pmatrix},
    \end{align}
    where we choose that $q_1$ is positive-definite and $s_{1}=\frac{\lambda+\mu}{\lambda+3\mu}$. Note that there is an additional $(n+1,n+1)$-entry $|\xi|$, and $s_1$ is different from (3.21) in \cite{Liu19}. Grouping the homogeneous terms of degree one in \eqref{3.4}, we get the following Sylvester equation:
    \begin{align*}
        (q_1-b_1)q_0+q_0q_1=E_1,
    \end{align*}
    where
    \begin{align*}
        E_1=i\sum_\alpha\frac{\partial (q_1-b_1)}{\partial \xi_\alpha}\frac{\partial q_1}{\partial x_\alpha}+b_0q_1+\frac{\partial q_1}{\partial x_n} - c_1.
    \end{align*}
    Note that $\frac{\partial q_1}{\partial x_\alpha}(x_0)=0$ in the boundary normal coordinates, we find that $\frac{\partial |\xi|}{\partial x_n}(x_0) = \frac{1}{|\xi|}\sum_\alpha \kappa_\alpha \xi_\alpha^2$. Therefore (cf. (5.24) in \cite{Liu19})
    \begin{align*}
        q_0(x_0)=
        \frac{1}{2}\biggl(\frac{1}{|\xi|^2}\sum_\alpha \kappa_\alpha \xi_\alpha^2 - H\biggr)I_{n+1}
        +
        \begin{pmatrix}
            (\tilde{q_0})^\alpha_\beta & (\tilde{q_0})^\alpha_n & \frac{i\beta}{(\lambda+3\mu)|\xi|}(\xi_\alpha) \\[2mm]
            (\tilde{q_0})^n_\beta & (\tilde{q_0})^n_n & -\frac{\beta}{\lambda+3\mu} \\[2mm]
            \frac{\mu\omega\beta\theta_0}{\alpha(\lambda+3\mu)|\xi|}(\xi_\beta) & \frac{i\mu\omega\beta\theta_0}{\alpha(\lambda+3\mu)} & 0
        \end{pmatrix},
    \end{align*}
    where
    \begin{align*}
        (\tilde{q_0})^\alpha_\beta &= -\kappa_\alpha(\delta_{\alpha\beta}) + 
        \biggl\{
            \frac{s_{1}\kappa_\alpha}{|\xi|^2} + \frac{H}{|\xi|^2} \biggl[\frac{s_{1}^2}{4}\biggl(\frac{3}{s_{2}}+1\biggr)+\frac{s_{1}}{4}\biggl(\frac{1}{s_{2}}-1\biggr)\biggr]\\
            &\quad + \frac{1}{|\xi|^4} \biggl[\frac{s_{1}^3}{4}\biggl(\frac{1}{s_{2}}+1\biggr)+\frac{s_{1}^2}{4}\biggl(7-\frac{1}{s_{2}}\biggr)-s_{1}\biggr]\sum_\gamma\kappa_\gamma \xi_\gamma^2
        \biggr\} (\xi_\alpha\xi_\beta),\\[1mm]
        (\tilde{q_0})^\alpha_n &=
        \biggl\{
            i(1+s_{1}) \frac{\kappa_\alpha}{|\xi|} + \frac{iH}{|\xi|}\biggl[\frac{s_{1}^2}{4}\biggl(\frac{3}{s_{2}}+1\biggr)-\frac{s_{1}}{4}\biggl(\frac{1}{s_{2}}+1\biggr)-\frac{1}{2}\biggl(1-\frac{1}{s_{2}}\biggr)\biggr] \\
            &\quad + \frac{i}{|\xi|^3}\biggl[\frac{s_{1}}{4}\biggl(\frac{1}{s_{2}}+7\biggr)-\frac{s_{1}^2}{2}\biggl(\frac{1}{s_{2}}-3\biggr)-\frac{s_{1}^3}{4}\biggl(3-\frac{1}{s_{2}}\biggr)\biggr]\sum_\gamma\kappa_\gamma \xi_\gamma^2
        \biggr\} (\xi_\alpha),\\[1mm]
        (\tilde{q_0})^n_\beta &=
        \biggl[
            -is_2(1+s_{1}) \frac{\kappa_\beta}{|\xi|} - \frac{iH}{4|\xi|}\big(s_{1}(1+s_{2})+s_{1}^2(3+s_{2})\big) \\
            &\quad - \frac{i}{4|\xi|^3} \big(2s^2(3s_2-1)+s_{1}^3(s_{2}+1)+s_{1}(1-3s_2)\big) \sum_\gamma\kappa_\gamma \xi_\gamma^2
        \biggr] (\xi_\beta),\\[1mm]
        (\tilde{q_0})^n_n &=
        \frac{H}{4} \big( s_{1}^2(3+s_{2})-s_{1}(1-s_{2}) \big)+\frac{1}{4|\xi|^2} \big(s_{1}^3(1+s_{2})-s_{1}^2(3-5s_2)-4s_1 s_2 \big) \sum_\gamma\kappa_\gamma \xi_\gamma^2,\\
        s_{2}&=\frac{\mu}{\lambda+2\mu}.
    \end{align*}
    Note that $s_2$ and the signs of the principal curvatures are different from (5.24) and (5.25) in \cite{Liu19}. Grouping the homogeneous terms of degree zero in \eqref{3.4}, we get
    \begin{align*}
        (q_1-b_1)q_{-1}+q_{-1}q_1=E_0,
    \end{align*}
    where
    \begin{align*}
        E_0=-q_0^2+i\sum_\alpha\Bigl(\frac{\partial (q_1-b_1)}{\partial \xi_\alpha}\frac{\partial q_0}{\partial x_\alpha}+\frac{\partial q_0}{\partial \xi_\alpha}\frac{\partial q_1}{\partial x_\alpha}\Bigr)+\frac{1}{2}\sum_{\alpha,\beta}\frac{\partial^2q_1}{\partial \xi_\alpha \partial\xi_\beta}\frac{\partial^2q_1}{\partial x_\alpha \partial x_\beta}+b_0q_0 +\frac{\partial q_0}{\partial x_n} - c_0.
    \end{align*}

    Proceeding recursively, grouping the homogeneous terms of degree $-m\ (m\geqslant 1)$ in \eqref{3.4}, we get
    \begin{align*}
        (q_1-b_1)q_{-m-1}+q_{-m-1}q_1=E_{-m},
    \end{align*}
    where
    \begin{align*}
        E_{-m}= b_0q_{-m}+\frac{\partial q_{-m}}{\partial x_n} - i\sum_\alpha\frac{\partial b_1}{\partial \xi_\alpha}\frac{\partial q_{-m}}{\partial x_\alpha} - \sum_{\substack{-m \leqslant j,k \leqslant 1 \\ |J| = j + k + m}} \frac{(-i)^{|J|}}{J !} \partial_{\xi}^{J} q_j\, \partial_{x^\prime}^{J} q_k, \quad m \geqslant 0.
    \end{align*}
\end{proof}

\addvspace{2mm}

We get the full symbol of the operator $Q$ by Proposition \ref{prop3.1}. This implies that $Q$ has been obtained on $\partial \Omega$ modulo a smoothing operator.
\begin{proposition}
    In the boundary normal coordinates, the thermoelastic Dirichlet-to-Neumann map $\Lambda_{g}$ can be represented as
    \begin{align}\label{3.6}
        \Lambda_{g} = AQ-D
    \end{align}
    modulo a smoothing operator, where $A$ and $D$ are given by \eqref{2.9}.
\end{proposition}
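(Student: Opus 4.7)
The plan is to combine the local representation $\Lambda_g = A(-\partial/\partial x_n) - D$ from \eqref{2.8} with the microlocal factorization established in Proposition \ref{prop3.1}. For any solution $\textbf{\textit{U}}$ of $\mathcal{L}_g \textbf{\textit{U}} = 0$ in $\Omega$ with $\textbf{\textit{U}}|_{\partial\Omega} = \textbf{\textit{V}}$, the key reduction is to show that
\begin{align*}
    \frac{\partial \textbf{\textit{U}}}{\partial x_n}\bigg|_{\partial\Omega} \equiv -Q \textbf{\textit{V}} \pmod{\text{smoothing}}.
\end{align*}
Granting this, applying $A$ on both sides and substituting into \eqref{2.8} immediately yields $\Lambda_g \textbf{\textit{V}} = A(-\partial_n \textbf{\textit{U}})-D\textbf{\textit{V}} \equiv AQ\textbf{\textit{V}} - D\textbf{\textit{V}}$ on $\partial\Omega$ modulo a smoothing operator, which is precisely \eqref{3.6}.

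To establish the above congruence, I would introduce the auxiliary quantity $\textbf{\textit{W}} := \bigl(I_{n+1}\partial_{x_n} + Q\bigr)\textbf{\textit{U}}$ in a collar neighborhood of $\partial\Omega$. By Proposition \ref{prop3.1},
\begin{align*}
    \Bigl(I_{n+1}\frac{\partial}{\partial x_n} + B - Q\Bigr)\textbf{\textit{W}} \equiv A^{-1}\mathcal{L}_g \textbf{\textit{U}} = 0 \pmod{\text{smoothing}}.
\end{align*}
Because the principal symbol $q_1$ of $Q$ was chosen positive-definite in \eqref{3.5}, the operator $I_{n+1}\partial_{x_n} + B - Q$ is elliptic in the direction of propagation inward from $\partial\Omega$. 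A standard forward parametrix construction of the type employed in Lee--Uhlmann \cite{LeeUhlm89} and in the first author's earlier work \cite{Liu19} then shows that $\textbf{\textit{W}}|_{\partial\Omega}$ equals a smoothing operator applied to the Dirichlet datum $\textbf{\textit{V}}$, which is exactly the desired congruence.

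The main obstacle, and where the substantive microlocal content resides, is the parametrix argument just described: one must verify that the positive-definite root $q_1$ truly corresponds to the branch of solutions that extends smoothly into the interior, and that the opposite factor $I_{n+1}\partial_{x_n} + B - Q$ admits a semi-global inverse modulo smoothing in the collar. For the thermoelastic system this step is tractable because the $(n+1,n+1)$-entry of $q_1$ is simply $|\xi|$ (inherited from the scalar heat part), so the thermal component decouples cleanly from the elastic block in $q_1$, and both satisfy the positivity condition required for the forward-propagation argument simultaneously. Once this microlocal step is in hand, the rest of the proof reduces to the algebraic substitution above, and the identification $\Lambda_g \equiv AQ - D$ modulo smoothing is immediate.
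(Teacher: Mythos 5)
Your proposal is correct and follows essentially the same route as the paper: factor $A^{-1}\mathcal{L}_g$ as in Proposition \ref{prop3.1}, set $W=\bigl(I_{n+1}\partial_{x_n}+Q\bigr)U$, show that $V\mapsto W|_{\partial\Omega}$ is smoothing so that $\partial_{x_n}U|_{\partial\Omega}\equiv -QV$ modulo smoothing, and then substitute into \eqref{2.8} to get \eqref{3.6}. The microlocal step you defer is carried out in the paper by exactly the Lee--Uhlmann device you cite: after noting partial hypoellipticity (so $U$, hence $W$, is smooth in $x_n$) and interior elliptic regularity (so $W|_{x_n=T}$ is smooth), the substitution $t=T-x_n$ turns $\bigl(I_{n+1}\partial_{x_n}+B-Q\bigr)W=Y\in C^\infty$ into a forward generalized heat equation whose solution operator is smoothing for $t>0$ because $q_1$ is positive-definite --- note the smoothing propagation runs from the interior slice toward $\partial\Omega$, not ``inward from $\partial\Omega$'' as you phrased it.
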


\addvspace{2mm}

\begin{proof}
    In the boundary normal coordinates $(x^\prime,x_n)$ with $x_n\in[0,T]$. Since the principal symbol of the operator $\mathcal{L}_{g}$ is negative-definite, the hyperplane ${x_n = 0}$ is non-characteristic, hence $\mathcal{L}_{g}$ is partially hypoelliptic with respect to this boundary (see \cite[p.\,107]{Hormander64}). Therefore, the solution to the equation $\mathcal{L}_g \textbf{\textit{U}} = 0$ is smooth in normal variable, that is, $\textbf{\textit{U}}\in (C^\infty([0,T];\mathfrak{D}^\prime (\mathbb{R}^{n})))^{n+1}$ locally. From Proposition \ref{prop3.1}, we see that \eqref{1.1} is locally equivalent to the following system of equations for $\textbf{\textit{U}},\textbf{\textit{W}}\in (C^\infty([0,T];\mathfrak{D}^\prime(\mathbb{R}^{n})))^{n+1}$:
    \begin{align*}
        \Bigl(I_{n+1}\frac{\partial }{\partial x_n} + Q\Bigr)\textbf{\textit{U}} & = \textbf{\textit{W}}, \quad \textbf{\textit{U}}\big|_{x_n=0}=\textbf{\textit{V}},\\
        \Bigl(I_{n+1}\frac{\partial }{\partial x_n} + B - Q\Bigr)\textbf{\textit{W}} & =\textbf{\textit{Y}} \in (C^\infty([0,T];\mathfrak{D}^\prime (\mathbb{R}^{n})))^{n+1}.
    \end{align*}
    Inspired by \cite{Liu19} (cf. \cite{LeeUhlm89}), if we substitute $t = T-x_n$ for the second equation above, then we get a backwards generalized heat equation
    \begin{align*}
        \frac{\partial \textbf{\textit{W}}}{\partial t} -(B-Q)\textbf{\textit{W}}=-\textbf{\textit{Y}}.
    \end{align*}
    Since $\textbf{\textit{U}}$ is smooth in the interior of $\Omega$ by interior regularity for elliptic operator $\mathcal{L}_{g}$, it follows that $\textbf{\textit{W}}$ is also smooth in the interior of $\Omega$, and so $\textbf{\textit{W}}|_{x_n=T}$ is smooth. In view of the principal symbol of $Q$ is positive-definite, we get that the solution operator for this heat equation is smooth for $t > 0$ (see \cite[p.\,134]{Treves80}). Therefore
    \begin{align*}
        \frac{\partial \textbf{\textit{U}}}{\partial x_n} + Q\textbf{\textit{U}}=\textbf{\textit{W}}
    \end{align*}
    locally. If we set $\mathcal{R} \textbf{\textit{V}} = \textbf{\textit{W}}|_{\partial \Omega}$, then $\mathcal{R}$ is a smoothing operator and
    \begin{align*}
        \frac{\partial \textbf{\textit{U}}}{\partial x_n}\bigg|_{\partial \Omega}=-Q\textbf{\textit{U}}|_{\partial \Omega}+\mathcal{R}\textbf{\textit{V}}.
    \end{align*}
    Combining this and \eqref{2.8}, we obtain \eqref{3.6}.
\end{proof}

We now apply the method of \cite{Liu19} (cf. \cite{Grubb86,Seeley67}) to get the full symbol of the resolvent operator $(\Lambda_g-\tau I)^{-1}$. Let $\Phi(\tau)$ be a two-sided parametrix for $\Lambda_g - \tau I$, i.e., $\Phi(\tau)$ is a pseudodifferential operator of order $-1$ with parameter $\tau$ for which
\begin{align*}
    & \Phi(\tau)(\Lambda_g - \tau I) = I \mod OPS^{-\infty}, \\
    & (\Lambda_g - \tau I)\Phi(\tau) = I \mod OPS^{-\infty}.
\end{align*}

Let $\sigma(\Lambda_g) \sim \sum_{j\leqslant 1} p_j(x,\xi)$ be the full symbol of $\Lambda_g$. It follows from \eqref{3.6} that
\begin{align*}
    p_1&=Aq_1-d_1,\\
    p_0&=Aq_0-d_0,\\
    p_{-m}&=Aq_{-m},\quad m\geqslant 1,
\end{align*}
where
\begin{align*}
    d_1=
    \begin{pmatrix}
        0 & i\mu(g^{\alpha\beta}\xi_\beta) & 0\\
        i\lambda (\xi_\beta) & 0 & 0\\
        0 & 0 & 0
    \end{pmatrix},\quad
    d_0=
    \begin{pmatrix}
        0 & 0 & 0\\
        \lambda (\Gamma^\alpha_{\alpha\beta}) & \lambda \Gamma^\alpha_{\alpha n} & -\beta\\
        0 & 0 & 0
    \end{pmatrix}.
\end{align*}
Let $\sigma(\Phi(\tau)) \sim \sum_{j \leqslant -1} \phi_j(x,\xi,\tau)$ be the full symbol of $\Phi(\tau)$. Therefore
\begin{align}
    \phi_{-1} &= (p_1-\tau I)^{-1}, \label{3.7}\\
    \phi_{-2} & = -\phi_{-1}
    \biggl(
        p_0\phi_{-1} - i\sum_\alpha \frac{\partial p_1}{\partial \xi_\alpha} \frac{\partial \phi_{-1}}{\partial x_\alpha}
    \biggr),\label{3.8}\\
    \phi_{-1-m} &= -\phi_{-1}\! \sum_{\substack{-m \leqslant j \leqslant 1 \\ -m \leqslant k \leqslant -1 \\ |J| = j + k + m}} \frac{(-i)^{|J|}}{J !} \partial_{\xi}^{J} p_j \, \partial_{x^\prime}^{J} \phi_k, \quad m \geqslant 2.\label{3.9}
\end{align}

\addvspace{10mm}

\section{Computations of the Coefficients of the Asymptotic Expansion}\label{s4}

\addvspace{5mm}

Inspired by \cite{Liu19} (cf. \cite{Grubb86,Seeley69}), we will establish an effective procedure to calculate the first $n-1$ coefficients of the asymptotic expansion of heat trace for the thermoelastic Dirichlet-to-Neumann map.

\addvspace{2mm}

\begin{proof}[Proof of Theorem {\rm \ref{thm1.1}}]
According to the theory of elliptic equations (see \cite{Morrey66,Morrey58.1,Morrey58.2,Stewart74}), we see that $\Lambda_g$ can generate a strongly continuous generalized heat semigroup $e^{-t\Lambda_g}$ in a suitable space defined on $\partial \Omega$. Furthermore, there exists a parabolic kernel $\textbf{\textit{K}}(t,x^{\prime},y^{\prime})$ such that (see \cite{Browder60} or \cite[p.\,4]{Fried64})
\begin{align*}
    e^{-t\Lambda_g} \textbf{\textit{V}}(x^{\prime})
    = \int_{\partial \Omega} \textbf{\textit{K}}(t,x^{\prime},y^{\prime})\textbf{\textit{V}}(x^{\prime}) \,dS, \quad \textbf{\textit{V}} \in (L^{2}(\partial \Omega))^{n+1}.
\end{align*}
Let $\{\textbf{\textit{U}}_k\}_{k \geqslant 1}$ be the orthonormal eigenvectors corresponding to the eigenvalues $\{\tau_k\}_{k \geqslant 1}$. Then
\begin{align*}
    \textbf{\textit{K}}(t,x^{\prime},y^{\prime}) 
    = e^{-t\Lambda_g} \delta(x^{\prime} - y^{\prime}) 
    = \sum_{k=1}^{\infty} e^{-t\tau_k} \textbf{\textit{U}}_k(x^{\prime}) \otimes \textbf{\textit{U}}_k(y^{\prime}).
\end{align*}
This implies that
\begin{align*}
    \int_{\partial \Omega} \operatorname{Tr} \textbf{\textit{K}}(t,x^{\prime},x^{\prime}) \,dS
    = \sum_{k=1}^{\infty} e^{-t\tau_k}
\end{align*}
is a spectral invariant. On the other hand, the semigroup $e^{-t\Lambda_g}$ can also be represented as
\begin{align*}
    e^{-t\Lambda_g}
    = \frac{i}{2\pi}\int_{\mathcal{C}} e^{-t\tau}(\Lambda_g-\tau I)^{-1} \,d\tau,
\end{align*}
where $\mathcal{C}$ is a suitable curve in the complex plane in the positive direction around the spectrum of $\Lambda_g$, that is, $\mathcal{C}$ is a contour around the positive real axis. It follows that
\begin{align*}
    \textbf{\textit{K}}(t,x^{\prime},y^{\prime})
    & = e^{-t\Lambda_g} \delta(x^{\prime} - y^{\prime}) \\
    & = \frac{1}{(2\pi)^{n-1}} \int_{T^{*}(\partial \Omega)} e^{i \langle x^{\prime} - y^{\prime},\xi \rangle} 
    \bigg( 
        \frac{i}{2\pi}\int_{\mathcal{C}} e^{-t\tau}\sigma\big((\Lambda_g-\tau I)^{-1}\big) \,d\tau
    \bigg) \,d\xi \\
    & = \frac{1}{(2\pi)^{n-1}} \int_{\mathbb{R}^{n-1}} e^{i \langle x^{\prime} - y^{\prime},\xi \rangle}
    \bigg(
        \frac{i}{2\pi}\int_{\mathcal{C}} e^{-t\tau} \sum_{j \leqslant -1} \phi_{j} \,d\tau 
    \bigg) \,d\xi,
\end{align*}
the trace of the kernel $\textbf{\textit{K}}(t,x^{\prime},y^{\prime})$ is
\begin{align*}
    \operatorname{Tr}\textbf{\textit{K}}(t,x^{\prime},x^{\prime})
    = \frac{1}{(2\pi)^{n-1}} \int_{\mathbb{R}^{n-1}}
    \bigg(
        \frac{i}{2\pi} \int_{\mathcal{C}} e^{-t\tau} \sum_{j \leqslant -1} \operatorname{Tr} \phi_{j}\,d\tau 
    \bigg) \,d\xi.
\end{align*}
Therefore,
\begin{align*}
    \sum_{k=1}^{\infty} e^{-t\tau_k}
    = \int_{\partial \Omega} 
    \bigg[ 
        \frac{1}{(2\pi)^{n-1}} \int_{\mathbb{R}^{n-1}}
        \bigg(
            \frac{i}{2\pi}\int_{\mathcal{C}} e^{-t\tau} \sum_{j \leqslant -1} \operatorname{Tr} \phi_{j} \,d\tau 
        \bigg) \,d\xi
    \bigg] \,dS.
\end{align*}
We will calculate the asymptotic expansion of the trace of the semigroup $e^{-t\Lambda_g}$ as $t \to 0^{+}$. More precisely, we will figure out that
\begin{equation}\label{4.1}
    a_{k}(x) 
    = \frac{i}{(2 \pi)^{n}}\int_{\mathbb{R}^{n-1}}\int_{\mathcal{C}} e^{-t\tau} \operatorname{Tr} \phi_{-1-k} \,d\tau \,d\xi, \quad  0 \leqslant k \leqslant n-1.
\end{equation}

\addvspace{2mm}

(i) According to \eqref{3.7}, at the origin $x_0 \in \partial \Omega$ in the boundary normal coordinates, we have (cf. (5.19) in \cite{Liu19})
\begin{align*}
    \phi_{-1}=
    \begin{pmatrix}
        \displaystyle -\frac{1}{\tau-\mu|\xi|}\big[I_{n-1} - t_1 (\xi_\alpha\xi_\beta)\big] & \displaystyle \frac{i\mu(1-s_1)(1-t_1|\xi|^2)}{(\tau-\mu|\xi|)(\tau-s_4|\xi|)}(\xi_\alpha) & 0\\[4mm]
        \displaystyle \frac{i(\lambda-s_3)(1-t_1|\xi|^2)}{(\tau-\mu|\xi|)(\tau-s_4|\xi|)}(\xi_\beta) & \displaystyle \frac{\mu(1-s_1)(\lambda-s_3)(1-t_1|\xi|^2)|\xi|^2}{(\tau-\mu|\xi|)(\tau-s_4|\xi|)^2}-\frac{1}{\tau-s_4|\xi|} &0\\[4mm]
        0&0&\displaystyle  -\frac{1}{\tau-\alpha|\xi|}
    \end{pmatrix},
\end{align*}
where
\begin{align*}
    s_3=\frac{(\lambda+\mu)(\lambda+2\mu)}{\lambda+3\mu},\quad
    s_4=\frac{2\mu(\lambda+2\mu)}{\lambda+3\mu},\quad
    t_1= \frac{\mu}{2|\xi|}\biggl(\frac{1-2s_1}{\tau-2\mu s_1|\xi|}-\frac{1}{\tau-2\mu|\xi|}\biggr).
\end{align*}
Therefore,
\begin{align*}
    \operatorname{Tr} \phi_{-1}&=
    -\frac{1}{\tau-\alpha|\xi|}-\frac{n-1}{\tau-\mu|\xi|} -\frac{1}{\tau-s_4|\xi|} -\frac{\mu s_1|\xi|}{(\tau-2\mu|\xi|)(\tau-2\mu s_1|\xi|)} \\
    &\quad-\frac{\mu^2(1-2s_1)|\xi|^2}{(\tau-\mu|\xi|)(\tau-2\mu |\xi|)(\tau-2\mu s_1|\xi|)} -\frac{\mu^2(1-s_1)^2|\xi|^2}{(\tau-\mu|\xi|)(\tau-s_4|\xi|)^2} \\
    &\quad -\frac{\mu^3s_1(1-s_1)^2|\xi|^3}{(\tau-2\mu|\xi|)(\tau-2\mu s_1|\xi|)(\tau-s_4|\xi|)^2} -\frac{\mu^4(1-s_1)^2(1-2s_1)|\xi|^4}{(\tau-\mu|\xi|)(\tau-2\mu|\xi|)(\tau-s_4|\xi|)^2}.
\end{align*}
By applying residue theorem, we get
\begin{align*}
    \frac{i}{2\pi}\int_{\mathcal{C}}e^{-t\tau}\operatorname{Tr}\phi_{-1}\,d\tau =
    e^{-t\alpha |\xi|}+(n-2)e^{-t\mu|\xi|}+e^{-2t\mu|\xi|}+e^{-2t\mu s_1|\xi|}.
\end{align*}
Using the following integral formula
\begin{align}\label{4.2}
    \int_{\mathbb{R}^{n-1}}e^{-C|\xi|}\,d\xi = \frac{\Gamma(n-1)\operatorname{vol}(\mathbb{S}^{n-2})}{C^{n-1}},\quad C>0,\ n\geqslant 2,
\end{align}
we obtain
\begin{align*}
    \frac{i}{(2 \pi)^{n}}\int_{\mathbb{R}^{n-1}}\int_{\mathcal{C}} e^{-t\tau} \operatorname{Tr} \phi_{-1} \,d\tau \,d\xi
    =a_0(x)t^{1-n},
\end{align*}
where $a_0(x)$ is given in Theorem \ref{thm1.1}.

\addvspace{2mm}

(ii) Note that $\frac{\partial |\xi|}{\partial x_\alpha}(x_0)=0$ in the boundary normal coordinates, we observe that $\frac{\partial p_1}{\partial x_\alpha}(x_0)=0$ and $\frac{\partial \phi_{-1}}{\partial x_\alpha}(x_0)=0$. Then, by \eqref{3.8} we find that $\phi_{-2}=-\phi_{-1}p_0\phi_{-1}$ at $x_0\in \partial \Omega$. Hence (cf. (5.28) in \cite{Liu19})
\begin{align*}
    &\operatorname{Tr} \phi_{-2}
    =-\operatorname{Tr}\,(\phi_{-1}^2p_0)
    =\frac{\alpha}{2(\tau-\alpha |\xi|)^2}\biggl(H-\frac{1}{|\xi|^2}\sum_\alpha \kappa_\alpha\xi_\alpha^2\biggr) -\frac{t_2}{(\tau-s_4|\xi|)^2} \\
    &\quad +\frac{1}{(\tau-\mu|\xi|)^2}\biggl[\frac{1}{2}(n+1)\mu H-\frac{(n-1)\mu H}{2|\xi|^2}\sum_\alpha \kappa_\alpha\xi_\alpha^2-t_3-t_5-t_1t_4(t_1|\xi|^2-2)\biggr]\\
    &\quad +\frac{\mu(1-s_1)(\lambda-s_3)(1-t_1|\xi|^2)^2(t_2|\xi|^2+t_4)}{(\tau-\mu|\xi|)^2(\tau-s_4|\xi|)^2} 
    +\frac{2\mu t_2(1-s_1)(\lambda-s_3)(1-t_1|\xi|^2)|\xi|}{(\tau-\mu|\xi|)(\tau-s_4|\xi|)^3}\\
    &\quad   
    +(t_6+t_7)\biggl[\frac{(1-t_1|\xi|^2)^2}{(\tau-\mu|\xi|)^2(\tau-s_4|\xi|)} +\frac{1-t_1|\xi|^2}{(\tau-\mu|\xi|)(\tau-s_4|\xi|)^2}-\frac{\mu (1-s_1)(\lambda-s_3)(1-t_1|\xi|^2)^2|\xi|^2}{(\tau-\mu|\xi|)^2(\tau-s_4|\xi|)^3}\biggr]\\
    &\quad -\frac{\mu^2t_2(1-s_1)^2(\lambda-s_3)^2(1-t_1|\xi|^2)^2|\xi|^4}{(\tau-\mu|\xi|)^2(\tau-s_4|\xi|)^4},
\end{align*}
where
\begin{align*}
    t_2&=\frac{1}{2(\lambda+3\mu)^2}\biggl[2H(\lambda^3+4\lambda^2\mu+\lambda\mu^2-8\mu^3)+\mu(3\lambda^2+12\lambda\mu+13\mu^2)\frac{1}{|\xi|^2}\sum_\alpha \kappa_\alpha\xi_\alpha^2\biggr],\\
    t_3&=\frac{(\lambda+\mu)^2}{2(\lambda+3\mu)^2}\biggl[H(2\lambda+5\mu)-\frac{\mu}{|\xi|^2}\sum_\alpha \kappa_\alpha\xi_\alpha^2\biggr],\\
    t_4&=\frac{(\lambda+2\mu)^2}{(\lambda+3\mu)^2}\biggl[H(\lambda^2+2\lambda\mu-\mu^2)|\xi|^2+2\lambda\mu\sum_\alpha \kappa_\alpha\xi_\alpha^2\biggr],\\
    t_5&=\frac{\mu(\lambda+\mu)(3\lambda+5\mu)}{(\lambda+3\mu)^2|\xi|^2}\sum_\alpha \kappa_\alpha\xi_\alpha^2,\\
    t_6&=\frac{2\mu^2}{(\lambda+3\mu)^3}\biggl[H(\lambda+\mu)(\lambda+2\mu)^2|\xi|+\mu(3\lambda^2+11\lambda\mu+12\mu^2)\frac{1}{|\xi|}\sum_\alpha \kappa_\alpha\xi_\alpha^2\biggr],\\
    t_7&=\frac{2\mu^2}{(\lambda+3\mu)^4}\biggl[H(\lambda+\mu)(\lambda+3\mu)(\lambda+2\mu)^2|\xi| +\mu(\lambda+5\mu)(2\lambda^2+7\lambda\mu+7\mu^2)\frac{1}{|\xi|}\sum_\alpha \kappa_\alpha\xi_\alpha^2\biggr].
\end{align*}
Using \eqref{4.2} and the following formula
\begin{align*}
    \int_{\mathbb{R}^{n-1}}e^{-C|\xi|} \frac{\xi_{\alpha}^2}{|\xi|^2} \,d\xi = \frac{\Gamma(n-1)\operatorname{vol}(\mathbb{S}^{n-2})}{(n-1)C^{n-1}},\quad C>0,\ n\geqslant 2,
\end{align*}
and applying residue theorem again, we obtain 
\begin{align*}
    \frac{i}{(2 \pi)^{n}}\int_{\mathbb{R}^{n-1}}\int_{\mathcal{C}} e^{-t\tau} \operatorname{Tr} \phi_{-2} \,d\tau \,d\xi
    =a_1(x)t^{2-n},
\end{align*}
where $a_1(x)$ is given in Theorem \ref{thm1.1}.

\addvspace{1mm}

Generally, we can calculate the symbols $q_{-k}$ and $p_{-k}$ for $k\geqslant 1$, then we get $\phi_{-1-k}$ by applying \eqref{3.9}. Therefore, \eqref{4.1} gives all the coefficients $a_k(x)$ for $0 \leqslant k \leqslant n-1$.
\end{proof}

\addvspace{5mm}

\begin{remark}
    \begin{enumerate}[{\rm (i)}]
        \item It can be verified that $a_2(x)$ and $a_3(x)$ have the following forms, respectively,
        \begin{align*}
            a_2(x)=\frac{\Gamma(n-2)\operatorname{vol}(\mathbb{S}^{n-2})}{(2\pi)^{n-1}}
            \bigg(
                a_{2,1}H^2+a_{2,2}\sum_{\alpha} \kappa_\alpha^2+ a_{2,3}R+a_{2,4}R_{nn}
            \bigg)
        \end{align*}
        and
        \begin{align*}
            a_3(x)=&\frac{\Gamma(n-3)\operatorname{vol}(\mathbb{S}^{n-2})}{(2\pi)^{n-1}}
            \bigg(
                a_{3,1}H^3+a_{3,2}H\sum_{\alpha} \kappa_\alpha^2+ a_{3,3}\sum_{\alpha} \kappa_\alpha^3+a_{3,4}HR+a_{3,5}HR_{nn} \\
                &+ a_{3,6}\sum_\alpha \kappa_\alpha R_{\alpha\alpha}+a_{3,7}\sum_\alpha \kappa_\alpha R_{n\alpha\alpha n}+a_{3,8}\nabla_{n}R_{nn}
            \bigg),
        \end{align*}
        where $R,\, R_{ii}\,(1\leqslant i \leqslant n)$ and $R_{n\alpha\alpha n}\,(1\leqslant \alpha \leqslant n-1)$ denote the scalar curvature, the components of the Ricci tensor and the Riemann curvature tensor of $\Omega$, respectively, and the coefficients $a_{2,j},\,a_{3,j}$ are constants depend only on $\lambda$, $\mu$, $\alpha$ and $n$.
    
        \addvspace{2mm}
    
        \item Generally, for each $k\ (0 \leqslant k \leqslant n-1)$, $a_k(x)$ is a universal polynomial in terms of the metric, its inverse and their derivatives, the total number of derivatives in each term is $k$. It follows from {\rm \cite[Theorem 1.1.3]{Gilkey04}} that for $2 \leqslant k \leqslant n-1$, $a_k(x)$ can be written as a universal polynomial in terms of $\nabla_{\partial \Omega}^j h$ for $0 \leqslant j \leqslant k-1$ and $\nabla^j {\rm Rm}$ for $0 \leqslant j \leqslant k-2$, where $\nabla_{\partial \Omega}$ is the Levi-Civita connection on $\partial \Omega$, $h$ is the second fundamental form of $\partial \Omega$ and ${\rm Rm}$ is the Riemann curvature tensor of $\Omega$.
    \end{enumerate}
\end{remark}

\addvspace{10mm}

\section*{Acknowledgements}

\addvspace{5mm}

This research was supported by NNSF of China (11671033/A010802) and NNSF of China \\
(11171023/A010801).

\addvspace{10mm}

\addvspace{5mm}


\begin{thebibliography}{99}

    \addvspace{5mm}

    \bibitem{ANPS09} W. Arendt, R. Nittka, W. Peter, F. Steiner, {\it Weyl’s Law: Spectral Properties of the Laplacian in Mathematics and Physics}, Mathematical Analysis of Evolution, Information, and Complexity, Wiley-VCH Verlag GmbH $\&$ Co. KGaA, Weinheim, 2009, pp. 1--72.

    \bibitem{AGMT10} M. Ashbaugh, F. Gesztesy, M. Mitrea, G. Teschl, {\it Spectral theory for perturbed Krein Laplacians in nonsmooth domains}, Adv. Math. 223 (2010), 1372--1467.

    \bibitem{Avra04.1} I. Avramidi, {\it Gauged gravity via spectral asymptotics of non-Laplace type operators}, J. High Energy Phys. 8(7) (2004), 792--796.

    \bibitem{Avra06} I. Avramidi, {\it Non-Laplace type operators on manifolds with boundary}, Analysis, Geometry and Topology of Elliptic Operators, World Sci. Publ, Hackensack, 2006, pp. 107--140.

    \bibitem{Bao19} G. Bao, L. Xu, T. Yin, {\it Boundary integral equation methods for the elastic and thermoelastic waves in three dimensions}, Comput. Methods Appl. Mech. Engrg. 354 (2019), 464--486.

    \bibitem{BGV92} N. Berline, E. Getzler, M. Vergne, {\it Heat Kernels and Dirac Operators}, Springer-Verlag, Berlin, 1992.

    \bibitem{Biot56} M. Biot, {\it Thermoelasticity and irreversible thermodynamics}, J. Appl. Phys. 27(3) (1956), 240--253.

    \bibitem{BranGilk90} T. Branson, P. Gilkey, {\it The asymptotics of the Laplacian on a manifold with boundary}, Comm. Partial Differential Equations 15 (1990), 245--272.

    \bibitem{BPKV99}T. Branson, P. Gilkey, K. Kirsten, D. Vassilevich, {\it Heat kernel asymptotics with mixed boundary conditions}, Nucl. Phys. B 563 (1999), 603--626.

    \bibitem{Browder60} F. Browder, {\it On the spectral theory of elliptic differential operators I}, Math. Ann. 142 (1960/61), 22--130.

    \bibitem{Carlson72} D. Carlson, {\it Linear Thermoelasticity}, Handbuch der Physik, VI a/2, Berlin: Springer-Verlag, 1972, pp. 297--345.

    \bibitem{Edward91} J. Edward, S. Wu, {\it Determinant of the Neumann operator on smooth Jordan curves},  Proc. Amer. Math. Soc. 111(2) (1991), 357--363.
  
    \bibitem{Fried64} A. Friedman, {\it Partial Differential Equations of Parabolic Type}, Prentice Hall, Englewood Cliff, 1964.

    \bibitem{Full95} S. Fulling (Ed.), {\it Heat Kernel Techniques and Quantum Gravity}, Discourses Math. Appl. 4, Texas A$\&$M Univ., College Station, 1995.

    \bibitem{Full92} S. Fulling, {\it Kernel asymptotics of exotic second-order operator}, 3rd International Colloquium on Differential Equations, Plovdiv, Belgria, August, 1992, pp. 18--22.

    \bibitem{Gilkey04} P. Gilkey, {\it Asymptotic Formulae in Spectral Geometry}, Boca Raton, 2004.

    \bibitem{Gilkey95} P. Gilkey, {\it Invariance Theory, The Heat Equation and the Atiyah--Singer Index Theorem}, CRC Press, Boca Raton, 1995.

    \bibitem{Gilkey79} P. Gilkey, {\it Recursion relations and the asymptotic behavior of the eigenvalues of the Laplacian}, Compos. Math. 38 (1979), 201--240.

    \bibitem{Gilkey75} P. Gilkey, {\it The spectral geometry of a Riemannian manifold}, J. Differential Geom. 10 (1975), 601--618.

    \bibitem{Greiner71} P. Greiner, {\it An asymptotic expansion for the heat equation}, Arch. Rational Mech. Anal. 41 (1971), 163--218.
    
    \bibitem{Grubb09} G. Grubb, {\it Distributions and Operators}, Graduate Texts in Mathematics, vol. 252. Springer, New York, 2009.

    \bibitem{Grubb86} G. Grubb, {\it Functional Calculus of Pseudo-Differential Boundary Problems}, Birkh\"{a}user, Boston, 1986.

    \bibitem{Hormander64} L. H\"{o}rmander, {\it Linear Partial Differential Operators}, Berlin: Springer, 1964.

    \bibitem{Hormander85.3} L. H\"{o}rmander, {\it The Analysis of Partial Differential Operators III},  Springer-Verlag, Berlin Heidelberg New York, 1985.

    \bibitem{Kac66} M. Kac, {\it Can one hear the shape of a drum?}, Amer. Math. Monthly 73 (4) (1966), 1--23.

    \bibitem{KawohlLeVe93} B. Kawohl, H. Levine, W. Velte, {\it Buckling eigenvalues for a clamped plate embedded in an elastic medium and related questions}, SIAM J. Math. Anal. 24 (1993), 327--340.

    \bibitem{Kirsten01} K. Kirsten, {\it Spectral Functions in Mathematics and Physics}, CRC Press, Boca Raton, 2001.

    \bibitem{Kirs98} K. Kirsten, {\it The $a_5$ heat kernel coefficient on a manifold with boundary}, Class. Quant. Grav. 15 (1998), L5--L12.

    \bibitem{Kore04} J. Korevaa, {\it Tauberian Theory: A Century of Developments}, Springer-Verlag, Berlin, Heidelberg, 2004.

    \bibitem{Kupr80} V. Kupradze, T. Gegelia, M. Basheleishvili, T. Burchuladze, {\it Three-dimensional problems of the mathematical theory of elasticity and thermoelasticity}, North Holland, Amsterdam, 1979.

    \bibitem{Kupr65} V. Kupradze, {\it Potential Methods in the Theory of Elasticity}, Israel Program of Scientific Translation, Jerusalem, 1965.

    \bibitem{LandLif86} L. Landau, E. M. Lifshitz, {\it Theory of Elasticity}, Third edition, Oxford, England: Butterworth Heinemann, 1986.

    \bibitem{LeeUhlm89} M. Lee, G. Uhlmann, {\it Determining anisotropic real-analytic conductivities by boundary measurements}, Commun. Pure Appl. Math. 42(8) (1989), 1097--1112.

    \bibitem{LiQin14} T. Li, T. Qin, {\it Physics and Partial Differential Equations}, vol. 2, Higher Education Press, 2014.

    \bibitem{Liu22S} G. Q. Liu, {\it The geometric invariants for the spectrum of the Stokes operator},
    Math. Ann. 382(3-4) (2022), 1985--2032.

    \bibitem{Liu11} G. Q. Liu, {\it The Weyl-type asymptotic formula for biharmonic Steklov eigenvalues on Riemannian manifolds}, Adv. Math. 228(4) (2011), 2162--2217.

    \bibitem{Liu15} G. Q. Liu, {\it Asymptotic expansion of the trace of the heat kernel associated to the Dirichlet-to-Neumann operator}, J. Partial Differ. Equ. 259(7) (2015), 2499--2545.

    \bibitem{Liu22p} G. Q. Liu, {\it Heat invariants of the perturbed polyharmonic Steklov problem}, Calc. Var. Partial Differential Equations, 61, 125 (2022).

    \bibitem{Liu21} G. Q. Liu, {\it Geometric Invariants of Spectrum of the Navier--Lamé Operator}, J. Geom. Anal. 31 (2021), 10164--10193.

    \bibitem{Liu19} G. Q. Liu, {\it Determination of isometric real-analytic metric and spectral invariants for elastic Dirichlet-to-Neumann map on Riemannian manifolds}, \href{https://arxiv.org/abs/1908.05096}{arXiv:1908.05096}.

    \bibitem{LiuTan21} G. Q. Liu, X. Tan, {\it Spectral Invariants of the Magnetic Dirichlet-to-Neumann Map on Riemannian Manifolds}, \href{https://arxiv.org/abs/2108.07611}{arXiv:2108.07611}.

    \bibitem{LiuTan22.1} G. Q. Liu, X. Tan, {\it Asymptotic Expansions of the Traces of the Thermoelastic Operators}, \href{https://arxiv.org/abs/2205.13238}{arXiv:2205.13238}.

    \bibitem{Loren47} G. Lorenß, {\it Beweis des Gaußschen Integralsatzes}, Math. Z. 51 (1947), 61--81.

    \bibitem{Morrey66} C. Morrey, {\it Multiple Integrals in the Calculus of Variations}, Springer-Verlag, New York, 1966.

    \bibitem{Morrey58.1} C. Morrey, {\it On the Analyticity of the Solutions of Analytic Non-Linear Elliptic Systems of Partial Differential Equations: Part I, Analyticity in the Interior}, Amer. J. Math. 80 (1958), 198--218.

    \bibitem{Morrey58.2} C. Morrey, {\it On the Analyticity of the Solutions of Analytic Non-Linear Elliptic Systems of Partial Differential Equations: Part II, Analyticity at the Boundary}, Amer. J. Math. 80 (1958), 219--237.

    \bibitem{Parkus76} H. Parkus, {\it Thermoelasticity}, Second revised and enlarged edition, Springer-Verlag/Wien, 1976.

    \bibitem{PoltSher15} I. Polterovich, D. Sher, {\it Heat invariants of the Steklov problem}, J. Geom. Anal. 25(2) (2015), 924--950.

    \bibitem{Protter87} M. Protter, {\it Can One Hear the Shape of a Drum? Revisited}, SIAM Rev. 29(2) (1987), 185--197.

    \bibitem{SaVa97} Yu. Safarov, D. Vassiliev, {\it The Asymptotic Distribution of Eigenvalues of Partial Differential Operators}, American Mathematical Society, 1997.

    \bibitem{Salenco01} J. Salenco, {\it Handbook of Continuum Mechanics: General Concepts, Thermoelasticity}, Springer-Verlag Berlin Heidelberg, New York, 2001.

    \bibitem{Sandgren55} L. Sandgren, {\it A Vibration Problem}, Meddelanden frÅn Lunds Universitets Matematiska Seminarium, 13 (1955), 1--83.

    \bibitem{Seeley67} R. Seeley, {\it Complex Powers of an Elliptic Operator}, Proc. Symp. Pure Math. 10 (1967), 288--307.

    \bibitem{Seeley69} R. Seeley, {\it The resolvent of an elliptic boundary value problem}, Amer. J. Math. 91 (1969), 889--920.

    \bibitem{Sommerfeld64} A. Sommerfeld, {\it Mechanics of Deformable Bodies}, New York: Academic Press, 1964.

    \bibitem{Stekloff02} W. Stekloff (V. Steklov), {\it Sur les problèmes fondamentaux de la physique mathématique $($suite et fin$)$}, (French) Ann. Sci. École Norm. Sup. 19 (1902), 455--490.

    \bibitem{Stewart74} H. Stewart, {\it Generation of analytic semigroups by strongly elliptic operators}, Trans. Amer. Math. Soc. 199 (1974), 141--161.

    \bibitem{Taylor11.2} M. Taylor, {\it Partial Differential Equations II}, Second Edition, Springer Science+Business Media, New York, 2011.

    \bibitem{Taylor11.3} M. Taylor, {\it Partial Differential Equations III}, Second Edition, Springer Science+Business Media, New York, 2011.

    \bibitem{Treves80} F. Treves, {\it Introduction to pseudodifferential and Fourier integral operator}, Plenum Press, New York, 1980.

    \bibitem{Vassilevich03} D. Vassilevich, {\it Heat kernel expansion: user's manual}, Phys. Rept. 388 (2003), 279--360.

    \bibitem{Weyl12} H. Weyl, {\it Über die Abhängigkeit der Eigenschwingungen einer Membran und deren Begrenzung}, J. Reine Angew. Math. 141 (1912), 1--11.
\end{thebibliography}
\end{document}